\documentclass{article}
\usepackage{arxiv}
\usepackage{amsmath}
\usepackage{hyperref}       
\usepackage{url}            
\usepackage{booktabs}       
\usepackage{amsfonts}       
\usepackage{nicefrac}       
\usepackage{microtype}      
\usepackage{doi}
\usepackage{graphicx} 
\usepackage{pgfplots}
\pgfplotsset{compat=1.18} 
\usepackage{graphicx} 
\usepackage{float}

\usepackage{bm}
\usepackage{amsmath}
\usepackage{amssymb}
\usepackage{amsthm}
\usepackage{caption}
\usepackage{subcaption}
\usepackage{algorithm}
\usepackage{algpseudocode}  
\usepackage{comment}
\usepackage{mathpazo} 
\usepackage{circuitikz}
\usepackage{tikz}
\usepackage{cleveref}       
\usepackage{xspace}
\usepackage{adjustbox}
\usepackage{mathtools}
\usetikzlibrary{patterns,hobby,decorations.pathmorphing}
\usepackage{cancel}
\usepackage{xcolor}
\usepackage{ulem}  
\usepackage{fourier}
\usepackage[T1]{fontenc}

\newtheorem{theorem}{Theorem}[section]
\newtheorem{lemma}[theorem]{Lemma}
\newtheorem{remark}[theorem]{Remark}

\newcommand{\argmin}{ \mathop{ \mathrm{argmin} } }

\newcommand{\setbulletpoint}[1]{}
 
\newcommand{\mmDelete}[1]{}

\newcommand{\HMnotshow}[1]{}

\newcommand{\newbmJ}{\widetilde{\bm{J}}}
\newcommand{\newbmR}{\widetilde{\bm{R}}}
\newcommand{\newbmQ}{\widetilde{\bm{Q}}}
\newcommand{\newbmB}{\widetilde{\bm{B}}}

\newcommand{\bmA}{\bm{A}}

\newcommand{\bmf}{\bm{f}}

\newcommand{\bmK}{\bm{K}}
\newcommand{\bmv}{\bm{v}}

\newcommand{\bmU}{\bm{U}}
\newcommand{\bmV}{\bm{V}}
\newcommand{\bmw}{\bm{w}}
\newcommand{\bmW}{\bm{W}}
\newcommand{\bmJ}{\bm{J}}
\newcommand{\bmR}{\bm{R}}

\newcommand{\bmB}{\bm{B}}
\newcommand{\bmD}{\bm{D}}

\newcommand{\bmM}{\bm{M}}

\newcommand{\bmQ}{\bm{Q}}
\newcommand{\bmX}{\bm{X}}

\newcommand{\R}{\mathbb{R}}

\newcommand{\Jmat}[1]{\mathbf{J}_{#1}}
\newcommand{\jtwo}{\Jmat{2}}

\newcommand{\curlyH}{\mathcal{H}}
\newcommand{\curlyHr}{\check{\mathcal{H}}}
\newcommand{\DivHamFOM}{\nabla_{\statex}\curlyH(\statex(t))}

\newcommand{\statex}{{\bm x}}
\newcommand{\outputy}{{\bm y}}
\newcommand{\stateapprox}{\tilde{\bm x}}
\newcommand{\stateRed}{\check{\bm x}}
\newcommand{\outputRed}{\check{\bm y}}
\newcommand{\statexT}{\statex(t)}
\newcommand{\outputyT}{\outputy(t)}
\newcommand{\stateRedT}{\stateRed(t)}
\newcommand{\outputRedT}{\outputRed(t)}
\newcommand{\inputu}{\bm{u}}

\newcommand{\ddt}{\frac{\rm d}{{\rm d} t}}

\newcommand{\bzero}{\mathbf{0}}

\newcommand{\SysLable}{\Sigma}

\newcommand{\snapsize}{n_t}

\newcommand{\portsize}{m}
\newcommand{\FOMsize}{N}
\newcommand{\ROMsize}{r}

\newcommand{\nlsize}{r_n}

\newcommand{\FOMsizeHalf}{n}

\newcommand{\Vbar}{\overline{\bmV}}

\newcommand{\approxfunc}{\varphi}
\newcommand{\decfunc}{\eta}

\newcommand{\matset}{\mathbf{S}}
\newcommand{\matfunc}{\mathcal{F}}

\newcommand{\funcV}{\mathcal{V}}
\newcommand{\funcW}{\mathcal{W}}
\newcommand{\funcJ}{\bm{J}}
\newcommand{\funcR}{\bm{R}}

\newcommand{\funcB}{\bm{B}}

\newcommand{\funcJr}{\check{\mathcal{J}}}
\newcommand{\funcRr}{\check{\mathcal{R}}}

\newcommand{\funcHr}{\check{\mathcal{H}}}
\newcommand{\funcBr}{\check{\mathcal{B}}}
\newcommand{\residual}{{\bm r}}

\newcommand{\JacRed}{\bmD_{\stateRed}\approxfunc(\stateRed)}

\newcommand{\ConFuncSet}{\bm{C}}

\newcommand{\VPOD}{\bmV_{\rm POD}}

\newcommand{\VPODGMG}{\overline{\bmV}}
\newcommand{\VPODQMone}{\overline{\bmV}_{1}}
\newcommand{\VPODQMtwo}{\overline{\bmV}_{2}}

\newcommand{\DEIMsplitHam}{p}

\newcommand{\Jcheck}{\check{\bmJ}}
\newcommand{\Rcheck}{\check{\bmR}}



\newcommand{\pH}{\textsf{pH}\xspace}
\newcommand{\GMG}{\textsf{GMG}\xspace}
\newcommand{\QM}{\textsf{QM}\xspace} 
\newcommand{\DEIM}{\textsf{DEIM}\xspace}
\newcommand{\DMD}{\textsf{DMD}\xspace}
\newcommand{\pHIN}{\textsf{pHIN}\xspace}
\newcommand{\OpInf}{\textsf{OpInf}\xspace}
\newcommand{\IRKA}{\textsf{IRKA}\xspace}
\newcommand{\POD}{\textsf{POD}\xspace}
\newcommand{\FOM}{\textsf{FOM}\xspace}
\newcommand{\ROM}{\textsf{ROM}\xspace}

\newcommand{\ROMs}{\textsf{ROM}s\xspace}
\newcommand{\MOR}{\textsf{MOR}\xspace}

\newcommand{\SPQuad}{\textsf{SP2}\xspace}
\newcommand{\SPLin}{\textsf{SP1}\xspace}

\newcommand{\framework}{framework \xspace}

\newcommand\tstrut{\rule{0pt}{2.8ex}}
\newcommand\AxisLineWidth{0.5pt}

\newcommand\LineWidth{0.8pt}
\newcommand\MarkerSize{1.8pt}

\newcommand{\MarkerSP}{triangle}
\newcommand{\MarkerSPG}{triangle}
\newcommand{\MarkerGMG}{triangle}
\newcommand{\MarkerGMGQM}{triangle}
\newcommand{\MarkerProj}{square}

\newcommand{\LineStyleSP}{dashed}
\newcommand{\LineStyleSPG}{dashed}
\newcommand{\LineStyleGMG}{solid}
\newcommand{\LineStyleProj}{dotted}

\newcommand{\ColorFOM}{black}
\newcommand{\ColorSP}{red}
\newcommand{\ColorSPG}{red}
\newcommand{\ColorGMG}{teal}
\newcommand{\ColorGMGQM}{blue}
\newcommand{\ColorLowbound}{cyan}
\renewcommand{\emph}[1]{\textit{#1}}
\newcommand*\samethanks[1][\value{footnote}]{\footnotemark[#1]}

\algrenewcommand\algorithmicrequire{\textbf{Input:}}
\algrenewcommand\algorithmicensure{\textbf{Output:}}

\title{Structure-Preserving Generalized Manifold Galerkin Reduction for Port-Hamiltonian Systems}

\date{}

\author{ 
	{
		\Large Silke ~Glas \thanks{
			Department of Applied Mathematics
			University of Twente
			Enschede, The Netherlands
			(\texttt{s.m.glas@utwente.nl}, \texttt{h.l.mu@utwente.nl})}}
	\And
	{
		\Large Hongliang~Mu \samethanks[1]} 
}

\renewcommand{\headeright}{}
\renewcommand{\undertitle}{}
\renewcommand{\shorttitle}{Structure-Preserving GMG Reduction for Port-Hamiltonian Systems}

\DeclareMathAlphabet{\mathcal}{OMS}{cmsy}{m}{n}

\begin{document}
	\maketitle
	
\begin{abstract}
		This paper considers structure-preserving model order reduction (\MOR) techniques for port-Hamiltonian (\pH) systems, which are typically derived from energy-based modeling.
		To keep favorable properties of \pH systems such as passivity in a reduced order model (\ROM), we use structure-preserving methods in the reduction process. 
		Although projection-based structure-preserving \MOR methods for nonlinear pH systems based on nonlinear approximation ansatzes have recently been proposed, existing approaches typically rely on specific structures of the approximation map and the underlying \pH system. 
		To address this limitation, we propose a \MOR framework based on generalized manifold Galerkin (\GMG) reduction. The resulting framework can employ general nonlinear approximation maps while preserving the \pH structure. We establish sufficient conditions for structure preservation, show that the associated non-degeneracy conditions are generically satisfied. 
		We further present linear and quadratic approximation maps within the proposed framework.
		Numerical examples for a linear and a nonlinear mass-spring-damper system show that the proposed \MOR methods have lower relative reduction error compared to existing methods. 
\end{abstract}
	
	\keywords{structure-preserving model reduction, port-Hamiltonian systems, generalized manifold Galerkin reduction, quadratically embedded manifolds}	

\section{Introduction}
		Port-Hamiltonian (\pH) systems are often derived via energy-based modeling and arise in a wide range of applications such as electrical circuits \cite{falaize2016passive}, mechanical systems \cite{duindam2009modeling}, and fluid dynamics \cite{califano2021geometric}. By using this type of modeling, a large-scale system can be decomposed into subsystems which are interconnected through their ports. 
		An important property of these \pH systems is that they satisfy an energy balance equation, which states that
		the rate of change of the Hamiltonian equals the supplied energy minus the dissipated energy. This relation can be used to derive system properties such as passivity. 
		However, the simulation of high-dimensional \pH systems can be computationally expensive. 
		To overcome this bottleneck, model order reduction (\MOR) techniques have been developed that approximate the full-order model (\FOM) by a lower-dimensional reduced-order model (\ROM), see e.g., \cite{Antoulas2005, BennerGugercinWilcox2015, BennerOhlbergerCW2017, Hesthaven_Peherstorfer_Unger_2026} for an overview. Well-known \MOR methods include the proper orthogonal decomposition (\POD)\cite{sirovich1987turbulence}, the iterative rational Krylov algorithm (\IRKA) \cite{antoulas2010interpolatory}, and the balanced truncation \cite{moore1981principal}. While these methods have been successfully applied in a wide range of settings, they do not generally preserve the \pH structure of the original system in the \ROM.
		However, numerous structure-preserving \MOR methods for \pH systems have been developed in recent years. In the following, we provide an overview of these methods.
		
		We first review MOR methods for linear pH systems. Structure-preserving \MOR approaches based on moment matching were introduced in \cite{ionescu2013families, wolf2010passivity}.
		Based on this framework, \pH-\IRKA was proposed in \cite{gugercin2012structure}.  
		Extensions of interpolatory structure-preserving \MOR to \pH differential-algebraic systems were considered in \cite{Beattie2022}.
		A Riemannian optimization approach for \MOR of \pH systems based on a trust region methods was developed in \cite{sato2018riemannian}. Another optimization-based approach named structured optimization-based model order reduction was introduced in \cite{schwerdtner2023sobmor} and constructs \ROMs by solving a parameter optimization problem. 
		Furthermore, non-intrusive structure-preserving \MOR methods, including  \pH-dynamic mode decomposition (\pH\DMD) \cite{morandin2023port}, \pH identification network (\pHIN) \cite{rettberg2024data}, and data-driven \pH-operator inference (\pH-\OpInf) \cite{geng2025data}, have been proposed in the last years.
		For \pH systems with nonquadratic Hamiltonian, multiple \MOR approaches have been introduced. 
		In \cite{ionescu2013moment}, a moment matching-based MOR method for nonlinear pH systems was developed. A structure-preserving balanced truncation \MOR method based on generalized Gramians was proposed in \cite{kawano2018structure}.
		Moreover, a \POD-based structure-preserving \MOR technique was developed by considering a \POD basis of the gradient of the Hamiltonian $\nabla_{\statex}\curlyH(\statex)$ \cite{chaturantabut2016structure}. For \pH systems with nonquadratic Hamiltonian, it is important to note that the evaluation of the \ROM still depends on the dimension of the \FOM even if a linear reduction technique is utilized. Therefore, structure-preserving discrete empirical interpolation methods (\DEIM) proposed in \cite{chaturantabut2016structure, pagliantini2023gradient}, are typically applied to reduce computational complexity.
		When the underlying system is advection-dominated, its approximation quality using linear \MOR methods might suffer from slowly decaying Kolmogorov $n$-widths, see e.g., \cite{greif2019decay, ohlberger2016reduced}. 
		To overcome this limitation, nonlinear \MOR methods can be used. 
		For \pH systems, the work of \cite{schulze2023structure} recently introduced a nonlinear \MOR method based on a special class of nonlinear approximation maps. 

		In this work, we propose a structure-preserving \MOR method for \pH systems based on the generalized manifold Galerkin (\GMG) reduction \cite{buchfink2024model}. The main contributions of this work are
\begin{enumerate}
	\item[(i)] We derive sufficient conditions under which the \GMG reduction preserves the \pH structure in Theorem \ref{Theo:pH_structure}. The resulting method is applicable to nonlinear \pH systems and allows for use of general nonlinear approximation maps. 
	\item [(ii)] We derive existence results for approximation maps in Section \ref{Sec:Existence_proofs} by showing that the associated non-degeneracy conditions are generically satisfied. 
	\item[(iii)] We present linear (Section \ref{Sec:GMGPODROM}) and quadratic (Section \ref{Sec:GMGQMROM}) realizations of the proposed \MOR method, resulting in the reduced models  \GMG-\POD-\ROM and \GMG-\QM-\ROM. We demonstrate on a linear and a nonlinear mass-spring damper system that these \ROMs achieve lower approximation error than existing methods. 
\end{enumerate}

The outline of this paper is as follows: in Section \ref{Sec:Prelim}, we introduce \pH systems and the \GMG reduction. We propose the novel \MOR technique that preserves the \pH structure and is based on the \GMG reduction in Section \ref{Sec:GMG_reduction}.
	In Section \ref{Sec:GMG_state_approx}, we establish generic existence results for approximations maps and detail how the \ROM is derived for a linear and a quadratic embedding functions. Numerical results for a linear and a nonlinear mass-spring-damper system are discussed in Section \ref{Sec:Numerical_results}. 
	We conclude and discuss possible extensions of this work in Section \ref{Sec:Outlook}. 


	\section{Preliminaries} \label{Sec:Prelim}
	
	In this section, we introduce the concepts and methods needed in the remainder of the paper. Section \ref{subsec:pH_intro} recalls \pH systems and their properties. 
	Next, we review the generalized manifold Galerkin (\GMG) reduction in Section \ref{subsec:gmg_recall}. 
		
	\subsection{Introduction to port-Hamiltonian systems}\label{subsec:pH_intro}
	
	We consider $\FOMsize$-dimensional \emph{input-state-output port-Hamiltonian (\pH) systems} of the form\footnote{We consider here state-independent system matrices $\funcJ, \funcR$ and $\funcB$ for readability of the paper. Nevertheless, the model reduction method proposed in this paper also applies to state-dependent system matrices.} 
	\begin{align}
		\label{equ:Nonlinear_FOM_pH}
		\SysLable(\funcJ,\funcR,\curlyH,\funcB, \inputu,\outputy,\statex_0)\coloneqq
		\left\{
		\begin{array}{rl}
			\ddt \statexT &=\left(\funcJ-\funcR\right)
			\DivHamFOM
			+\funcB \inputu(t),
			\\
			\outputyT &=\funcB^{\top}\DivHamFOM,
			\quad \statex(t_0) = \statex_0,
		\end{array}
		\right.
	\end{align}
	with state $\statex(t) \in \R^{\FOMsize}$, input $\inputu(t) \in \R^{\portsize}$, output $\outputy(t) \in \R^{\portsize}$ on time interval $\mathcal{I}:=(t_0,t_{\text{end}}]$ and initial condition $\statex_0\in\R^{\FOMsize}$. 
	This \pH system exchanges energy to its environment via ports, and $\funcB \in \R^{\FOMsize\times \portsize}$, also called port-matrix, describes this exchange. 
	The function $ \curlyH \in \mathcal{C}^{1}(\R^{\FOMsize}, \R)$, called the \emph{Hamiltonian}, represents the internal energy of the system. We assume that $\curlyH$ is bounded from below, i.e., $\exists \statex_e\in\R^{\FOMsize}$ such that  
	\begin{align*} 
		\curlyH(\statex) \geq \curlyH(\statex_e), \quad \forall
		\statex \in \R^{\FOMsize}.
	\end{align*}
	Further, we have the matrices $\funcJ,\funcR \in \R^{\FOMsize\times \FOMsize}$ related to the interconnection and dissipation of the system, which satisfy
	\begin{align*}
		\funcJ = -\funcJ^{\top} \in \R^{\FOMsize\times \FOMsize},\quad
		0\preceq \funcR = \funcR^{\top} \in \R^{\FOMsize\times \FOMsize},\quad
		{\rm det}\left(\funcJ-\funcR\right)\neq 0.
	\end{align*} 
	The latter condition, i.e.,  ${\rm det}\left(\funcJ-\funcR\right)\neq 0$, is not part of the standard definition of \pH systems. It is imposed throughout this paper. 
	One important property of \pH systems $\SysLable(\funcJ,\funcR,\curlyH,\funcB, \inputu,\outputy,\statex_0)$ is that they satisfy a power balance equation, i.e., 
	\begin{align}
		\label{equ:energy_conserve}
		\outputyT^{\top}\inputu(t)
		=\ddt \left(\statexT^{\top} \nabla_{\statex}\curlyH\right)+
		\left(\funcR\nabla_{\statex}\curlyH(\statexT)\right)^{\top}\nabla_{\statex}\curlyH(\statexT).
	\end{align}
	The above equation shows that the power supplied through the ports equals the rate of change of stored energy and the dissipated power \cite{polyuga2012effort}. Here, we denote $\outputyT^{\top}\inputu(t)$ and $(\funcR\nabla_{\statex}\curlyH(\statexT))^{\top}\nabla_{\statex}\curlyH(\statexT)$ as power supply rate and power dissipation rate. By integration of \eqref{equ:energy_conserve} over the time period $[0,T]$, we arrive at the energy balance equation for \pH systems
	\begin{align}
		\label{equ:energy_conserve_time_int}
		\curlyH(\statex(T))-\curlyH(\statex(0))
		=
		{\int_{t=0}^{T}\outputyT^{\top}\inputu(t) \ {\rm d}t}
		-
			\int_{t=0}^{T}\left(\funcR\nabla_{\statex}\curlyH(\statexT)\right)^{\top}\nabla_{\statex}\curlyH(\statexT) \ {\rm d}t.
	\end{align}	
	For more details on \pH systems, we refer the reader to, e.g., \cite{jacob2012linear, vanderschaft2014port, rashad2020,mehrmann2023control}.

	
	\subsection{Generalized manifold Galerkin}
	\label{subsec:gmg_recall}
The differential-geometric framework introduced in \cite{buchfink2024model} provides a general setting for model order reduction on manifolds. One reduction method allowing for structure preservation within this framework is the generalized manifold Galerkin (\GMG) reduction. Since the structure-preserving reduction method proposed in this paper is based on the \GMG reduction, we briefly recall the essential ingredients of the framework in the following. Throughout the paper, we work on a fixed chart and use the corresponding coordinate representation of the involved quantities. For a manifold-based formulation of the \GMG reduction, we refer to \cite[Section 5]{buchfink2024model}. 
	
	As our full order model (\FOM), we consider the initial value problem 
	\begin{align*}
		\ddt \statex(t) = \bmf (\statex(t)) \quad \text{in }  \R^{\FOMsize}, t \in \mathcal{I}, \qquad \statex(t_0) = \statex_0 \quad \text{in } \R^{\FOMsize}, 
	\end{align*}
	with $\bmf : \R^{\FOMsize} \rightarrow \R^{\FOMsize}$. Our goal is to derive a \ROM of the form 
	\begin{align}
		\label{eq:ROMManiMOR}
		\ddt \stateRed(t) = \check{\bmf} (\stateRed(t)) \quad \text{in }  \R^{\ROMsize}, t \in \mathcal{I}, \qquad \stateRed(t_0) = \stateRed_0 \quad \text{in } \R^{\ROMsize}, 
	\end{align}
	where $\check{\bmf} : \R^{\ROMsize} \rightarrow \R^{\ROMsize}$ and $\ROMsize \ll \FOMsize$. To this end, we define an embedding map and a point reduction map by 
	\begin{align*}
		\approxfunc :  \R^{\ROMsize} \rightarrow  \R^{\FOMsize}, \qquad  \rho :  \R^{\FOMsize} \rightarrow  \R^{\ROMsize}
	\end{align*} 
	satisfying the point projection property, i.e., $\rho \circ \approxfunc = \mathbb{I}_{\ROMsize}$. As a consequence, $\approxfunc \circ \rho$ defines a projection onto the trial manifold parameterized by $\approxfunc$. 
	To define a projection not only on the manifold itself, but also on its tangent bundle, we use the tangent embedding map and the tangent reduction map
	\begin{align*}
		\bmD_{\stateRed}\approxfunc :  \R^{\ROMsize} \rightarrow  \R^{\FOMsize}, \qquad  \mathcal{R}_{\text{red},\statex} :  \R^{\FOMsize} \rightarrow  \R^{\ROMsize},
	\end{align*}
	where $\bmD_{\stateRed}\approxfunc$ denotes the Jacobian of $\approxfunc$. We assume that the tangent embedding and tangent reduction map satisfy $\mathcal{R}_{\text{red}, \approxfunc(\stateRed)}  \circ \bmD_{\stateRed}\approxfunc= \mathbb{I}_{\ROMsize} $ for all $\stateRed \in \R^{\ROMsize}$. The reduced vector field and the reduced initial condition in \eqref{eq:ROMManiMOR} are defined by
	\begin{align*}
		\check{\bmf}(\stateRed) :=	 \mathcal{R}_{\text{red},\approxfunc(\stateRed)} \left(\bmf(\approxfunc(\stateRed))\right) \quad  \text{in } \R^{\ROMsize}  \qquad\text{and}\qquad \stateRed_0 := \rho(\statex_0).
	\end{align*} 
	With this setting at hand, we can introduce the \GMG reduction, which is a particular kind of tangent reduction map that allows for structure preservation. To this end, let $\bm{G} \in \R^{\FOMsize \times \FOMsize}$ be a non-degenerate matrix encoding the underlying structure of the system. For example, in the reduction of Hamiltonian systems, $\bm{G}$ may be chosen as the canonical Poisson matrix. Then, the \emph{\GMG reduction} is defined by
	\begin{align*}
		\mathcal{R}_{\text{red},\approxfunc(\stateRed)} \left(\bmf(\approxfunc(\stateRed))\right) = 	\left( \left(\bmD_{\stateRed}\approxfunc(\stateRed)\right)^{\top} \bm{G}  \bmD_{\stateRed}\approxfunc(\stateRed) \right)^{-1} \left(\bmD_{\stateRed}\approxfunc(\stateRed)\right)^{\top} \bm{G} \bmf(\approxfunc(\stateRed)).
	\end{align*} 
	For notational convenience, we denote the \GMG reduction map in this paper by 
	\begin{align}
	\label{equ:GMG_redu_map}
		\mathcal{F}_{\bm{G}} \left(\bmD_{\stateRed}\approxfunc(\stateRed) \right) = 	\left( \left(\bmD_{\stateRed}\approxfunc(\stateRed)\right)^{\top} \bm{G}  \bmD_{\stateRed}\approxfunc(\stateRed) \right)^{-1} \left(\bmD_{\stateRed}\approxfunc(\stateRed)\right)^{\top} \bm{G}. 
	\end{align}
	The above definition of the \GMG reduction map assumes that $\left(\bmD_{\stateRed}\approxfunc(\stateRed)\right)^{\top} \bm{G} \bmD_{\stateRed}\approxfunc(\stateRed)$ is non-degenerate. We therefore define $\matset_{\bm{G}}$ as the set of all matrices satisfying this non-degeneracy condition by 
	\begin{align}
		\label{equ:mat_set_defi}
		\matset_{\bm{G}}\coloneqq \left\{ \bmV\in\R^{\FOMsize\times \ROMsize} \mid {\rm det}(\bmV^{\top} \bm{G} \bmV)\neq 0,\  1\leq\ROMsize\leq\FOMsize \right\}.
	\end{align}
	
	Although the \GMG reduction provides a natural mechanism for preserving structure in a \ROM, it does not immediately provide a reduction method that preserves the input-output structure of \pH systems. In Section \ref{Sec:GMG_reduction}, we show how the \GMG reduction can be adapted to obtain structure-preserving \ROMs of \pH systems.

\section{Generalized manifold Galerkin for \pH systems}
\label{Sec:GMG_reduction}	
We now detail how the \GMG reduction can be utilized to obtain structure-preserving \ROMs of \pH systems. 
To this end, let $\SysLable(\funcJ,\funcR,\curlyH,\bmB, \inputu,\outputy,\statex_0)$ be a \pH system of the form \eqref{equ:Nonlinear_FOM_pH}. Our goal is to approximate the full-order state trajectory by %
\begin{align}
\label{equ:nonlinear_approximation}
	\stateapprox(t)=\approxfunc(\stateRed(t)) \approx \statex(t),
\end{align}
where $\stateRed(t) \in\R^{\ROMsize}$ is the reduced state, $\stateapprox(t) \in\R^{\FOMsize}$ is the reconstructed state and $\approxfunc\colon\R^{\ROMsize}\rightarrow\R^{\FOMsize}$ with $\portsize<\ROMsize\ll \FOMsize$ is a (possibly nonlinear) approximation map. 
								
To derive a \ROM we first define the time-continuous residual of the \FOM \eqref{equ:Nonlinear_FOM_pH} with respect to the reconstructed solution $\stateapprox(t)$ by
\begin{align*}
	\residual(t)\coloneqq &\ddt \stateapprox(t)-\left(\funcJ-\funcR\right)\nabla_{\statex}\curlyH(\stateapprox(t)) -\bmB\inputu(t) \\
	=  &\left( \bmD_{\stateRed}\approxfunc(\stateRed) \right) \ddt \stateRedT -\left(\funcJ-\funcR\right)\nabla_{\statex}\curlyH(\approxfunc(\stateRedT)) -\bmB\inputu(t). 
\end{align*} 
Then, we require that by using the \GMG reduction \eqref{equ:GMG_redu_map} the projected residual vanishes, i.e., 
\begin{align*}
	\funcW(\stateRedT)^{\top}\residual(t) \overset{!}{=} \bzero, 
\end{align*}
where we denote by $\funcW\colon\R^{\ROMsize}\rightarrow\R^{\FOMsize\times\ROMsize}$ the \GMG reduction map with 
\begin{align}
	\label{equ:GMG_NL_Construct_W}
	\funcW\left( \stateRed \right) \coloneq \left(\matfunc_{\left(\funcJ-\funcR\right)^{-1}} \left(\bmD_{\stateRed}\approxfunc(\stateRed) \right) \right)^{\top}.
\end{align}
By reformulating the latter equation, we arrive at the following \ROM
\begin{subequations}\label{equ:ROM_non_pH_structure}
\begin{align}
	\ddt \stateRedT &=
	\funcW(\stateRedT)^{\top}\left(\funcJ-\funcR \right)\nabla_{\statex}\curlyH(\approxfunc(\stateRedT)) + \funcW(\stateRedT)^{\top}\bmB\inputu(t),\label{equ:ROM_non_pH_structure_A}\\
	\outputRedT&=\bmB^{\top}\nabla_{\statex}\curlyH(\approxfunc(\stateRedT)), \quad \stateRed(0) = \stateRed_0 , \label{equ:ROM_non_pH_structure_B}
\end{align}
\end{subequations}
where $\outputRedT \in \R^{\portsize}$ denotes the reduced output and $\stateRed_0 \in \R^{r}$ the reduced initial condition. The following theorem characterizes the conditions under which the \ROM \eqref{equ:ROM_non_pH_structure} retains the \pH structure.

\begin{theorem}
\label{Theo:pH_structure}
Let $\SysLable(\funcJ,\funcR,\curlyH,\bmB, \inputu,\outputy,\statex_0)$ be an N-dimensional \pH system of the form \eqref{equ:Nonlinear_FOM_pH}, and let  $\approxfunc\colon\R^{\ROMsize}\mapsto\R^{\FOMsize}$ be an approximation map. Assume that for every $\stateRed\in\R^{\ROMsize}$, ${\rm span}(\bmB)\subseteq {\rm span}(\JacRed)$ and $\bmD_{\stateRed}\approxfunc(\stateRed)\in\matset_{\left(\funcJ-\funcR\right)^{-1}}$, see \eqref{equ:mat_set_defi}. Then, the \ROM obtained by using the \GMG reduction in \eqref{equ:ROM_non_pH_structure} is again a \pH system.
\end{theorem}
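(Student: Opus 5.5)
The plan is to write the \ROM \eqref{equ:ROM_non_pH_structure} explicitly in the form \eqref{equ:Nonlinear_FOM_pH}, with a reduced Hamiltonian, reduced structure matrices and a reduced port matrix, and then to check each defining property in turn. Throughout, abbreviate $\bmV:=\JacRed$, $\bmM:=\funcJ-\funcR$ and $\bm{G}:=\bmM^{-1}$. The hypothesis $\bmV\in\matset_{\bm{G}}$ makes $\bm{A}:=\bmV^{\top}\bm{G}\bmV$ invertible, and it forces $\bmV$ to have full column rank $\ROMsize$. By \eqref{equ:GMG_NL_Construct_W} and \eqref{equ:GMG_redu_map}, $\funcW^{\top}=\bm{A}^{-1}\bmV^{\top}\bm{G}$, and hence $\funcW^{\top}\bmV=\mathbb{I}_{\ROMsize}$.

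\textbf{Reduced Hamiltonian and drift.} Take $\funcHr:=\curlyH\circ\approxfunc$. It is $\mathcal{C}^1$, and it is bounded below by $\curlyH(\statex_e)$. Its gradient is
\begin{align*}
\nabla_{\stateRed}\funcHr(\stateRed)=\bmV^{\top}\nabla_{\statex}\curlyH(\approxfunc(\stateRed)).
\end{align*}
Since $\bm{G}\bmM=\mathbb{I}$, the drift term becomes
\begin{align*}
\funcW^{\top}\bmM\nabla_{\statex}\curlyH=\bm{A}^{-1}\bmV^{\top}\nabla_{\statex}\curlyH=\bm{A}^{-1}\nabla_{\stateRed}\funcHr.
\end{align*}
Define $\Jcheck:=\tfrac12(\bm{A}^{-1}-\bm{A}^{-\top})$ and $\Rcheck:=-\tfrac12(\bm{A}^{-1}+\bm{A}^{-\top})$. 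Then $\Jcheck$ is skew, $\Rcheck$ is symmetric, and $\Jcheck-\Rcheck=\bm{A}^{-1}$ is nonsingular, which matches the extra standing assumption ${\rm det}\neq 0$.

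\textbf{Semidefiniteness of $\Rcheck$.} This is the key step. Use the identity $\tfrac12(\bm{X}^{-1}+\bm{X}^{-\top})=\bm{X}^{-1}\,\tfrac12(\bm{X}+\bm{X}^{\top})\,\bm{X}^{-\top}$ twice. Applied to $\bm{X}=\bmM$, it shows that the symmetric part of $\bm{G}$ is $-\bmM^{-1}\funcR\bmM^{-\top}\preceq 0$. Congruence by $\bmV$ then gives that the symmetric part of $\bm{A}$ is $-\bmV^{\top}\bmM^{-1}\funcR\bmM^{-\top}\bmV\preceq 0$. Applying the identity again to $\bm{X}=\bm{A}$ gives
\begin{align*}
\Rcheck=\bm{A}^{-1}\left(\bmV^{\top}\bmM^{-1}\funcR\bmM^{-\top}\bmV\right)\bm{A}^{-\top}\succeq 0.
\end{align*}

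\textbf{Ports.} By the assumption ${\rm span}(\bmB)\subseteq{\rm span}(\bmV)$, there is a unique $\newbmB(\stateRed)\in\R^{\ROMsize\times\portsize}$ with $\bmB=\bmV\newbmB$. Uniqueness holds because $\bmV$ has full column rank, and explicitly $\newbmB=(\bmV^{\top}\bmV)^{-1}\bmV^{\top}\bmB$. The input term then reads $\funcW^{\top}\bmB=\funcW^{\top}\bmV\newbmB=\newbmB$. The output is
\begin{align*}
\bmB^{\top}\nabla_{\statex}\curlyH(\approxfunc(\stateRed))=\newbmB^{\top}\bmV^{\top}\nabla_{\statex}\curlyH(\approxfunc(\stateRed))=\newbmB^{\top}\nabla_{\stateRed}\funcHr(\stateRed).
\end{align*}
Hence \eqref{equ:ROM_non_pH_structure} equals $\SysLable(\Jcheck,\Rcheck,\funcHr,\newbmB,\inputu,\outputRed,\stateRed_0)$.

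\textbf{Main obstacle.} I expect two points to need care. The first is the semidefiniteness argument for $\Rcheck$, which is handled by the double-inverse congruence identity above. The second is that $\Jcheck$, $\Rcheck$ and $\newbmB$ are in general state-dependent through $\bmV(\stateRed)$. For a nonlinear $\approxfunc$ this is unavoidable, and the claim must then be read in the state-dependent \pH setting allowed by the footnote to \eqref{equ:Nonlinear_FOM_pH}. The power balance still holds there because the pointwise skew/PSD structure is all that is needed. When $\approxfunc$ is linear, all reduced matrices are constant.
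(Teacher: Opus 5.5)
Your proof is correct and shares the paper's skeleton. Both use $\funcHr=\curlyH\circ\approxfunc$ with the chain rule, $\funcW^{\top}\bmV=\mathbb{I}_{\ROMsize}$ for $\bmV=\JacRed$, and the port argument through $\bmB=\bmV\newbmB$, which gives the same reduced port matrix $\funcBr=\funcW^{\top}\bmB$.

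The one genuine difference is how you show that $\bm{A}^{-1}$, with $\bm{A}=\bmV^{\top}(\funcJ-\funcR)^{-1}\bmV$, is skew-symmetric minus positive semidefinite. The paper uses the congruence identity $\funcW^{\top}(\funcJ-\funcR)\funcW=\bm{A}^{-1}$ and sets $\funcJr=\funcW^{\top}\funcJ\funcW$, $\funcRr=\funcW^{\top}\funcR\funcW$. Both properties are then inherited from $\funcJ$ and $\funcR$ in one line, and the reduced structure matrices appear directly as congruence transforms $\funcW^{\top}(\cdot)\funcW$ of the full ones. You instead take the skew and symmetric parts of $\bm{A}^{-1}$ and transport semidefiniteness through $\tfrac{1}{2}(\bm{X}^{-1}+\bm{X}^{-\top})=\bm{X}^{-1}\tfrac{1}{2}(\bm{X}+\bm{X}^{\top})\bm{X}^{-\top}$, once for $\bm{X}=\funcJ-\funcR$ and once for $\bm{X}=\bm{A}$.

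The outcomes coincide. Since $\funcW^{\top}=\bm{A}^{-1}\bmV^{\top}(\funcJ-\funcR)^{-1}$, your dissipation matrix $-\tfrac{1}{2}(\bm{A}^{-1}+\bm{A}^{-\top})=\bm{A}^{-1}\bmV^{\top}(\funcJ-\funcR)^{-1}\funcR(\funcJ-\funcR)^{-\top}\bmV\bm{A}^{-\top}$ is exactly $\funcW^{\top}\funcR\funcW$, and your skew part is $\funcW^{\top}\funcJ\funcW$.

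Your route is slightly longer, but it isolates a general fact: matrices with negative semidefinite symmetric part are closed under congruence and inversion. This is the mechanism the paper itself uses in Section~\ref{Sec:Existence_proofs} for $(\bmJ-\bmR)^{-1}$. It also leads you to check two things the paper's proof leaves implicit: that $\funcJr-\funcRr=\bm{A}^{-1}$ is nonsingular (the paper's standing assumption on $\funcJ-\funcR$), and that $\funcHr\geq\curlyH(\statex_e)$. One caveat on the latter: read literally, the paper's definition of ``bounded from below'' asks for an attained minimum $\funcHr(\stateRed)\geq\funcHr(\stateRed_e)$. Your bound alone does not give that, though the paper does not address it either.
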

\begin{proof}
Let $\funcV(\stateRed)\coloneq\bmD_{\stateRed}\approxfunc(\stateRed)$. By the definition of the \GMG reduction map \eqref{equ:GMG_NL_Construct_W}
we have \begin{align*}\funcW(\stateRed)^{\top} \funcV(\stateRed) = \mathbb{I}_r.\end{align*} 
Using \eqref{equ:ROM_non_pH_structure_A}
\begin{align*}
		\ddt \stateRedT &=
		\funcW(\stateRed)^{\top}\left(\funcJ-\funcR \right)\nabla_{\statex}\curlyH(\approxfunc(\stateRedT)) + \funcW(\stateRed)^{\top}\bmB\inputu(t) \\
		&= \left( \funcV(\stateRed)^{\top}\left(\funcJ-\funcR \right)^{-1} \funcV(\stateRed) \right)^{-1} \funcV^{\top} \nabla_{\statex}\curlyH(\approxfunc(\stateRedT)) + \funcW(\stateRed)^{\top}\bmB\inputu(t). 
\end{align*}
Defining $	\curlyHr(\stateRed) \coloneq \curlyH(\approxfunc(\stateRed))$ and applying the chain rule yields
\begin{align*}
	\ddt \stateRedT =\left( \funcV(\stateRed)^{\top}\left(\funcJ-\funcR \right)^{-1} \funcV(\stateRed) \right)^{-1}  \nabla_{\stateRed}\curlyHr(\stateRedT)  + \funcW(\stateRed)^{\top}\bmB\inputu(t).
\end{align*}
Moreover, by the definition of the \GMG reduction map, we get 
\begin{align*}
	\funcW(\stateRed)^{\top}\left(\funcJ-\funcR \right) \funcW(\stateRed) = \left( \funcV(\stateRed)^{\top}\left(\funcJ-\funcR \right)^{-1} \funcV(\stateRed) \right)^{-1},
\end{align*}
and consequently
\begin{align*}
	\ddt \stateRedT =\funcW(\stateRed)^{\top}\left(\funcJ-\funcR \right) \funcW(\stateRed)  \nabla_{\stateRed}\curlyHr(\stateRedT)  + \funcW(\stateRed)^{\top}\bmB\inputu(t).
\end{align*}
Setting the reduced quantities by 
$
		\funcJr(\stateRed) \coloneqq\funcW(\stateRed)^{\top}\funcJ\funcW(\stateRed), 
		\funcRr(\stateRed) \coloneqq\funcW(\stateRed)^{\top}\funcR\funcW(\stateRed)$,
yields
\begin{align*}
	\ddt \stateRedT =\left( \funcJr(\stateRed) - \funcRr(\stateRed)  \right) \nabla_{\stateRed}\curlyHr(\stateRedT)  + \funcW(\stateRed)^{\top}\bmB\inputu(t).
\end{align*}
Note that $\funcJr(\stateRed) = -\funcJr(\stateRed)^{\top}$, $\funcRr(\stateRed) = \funcRr(\stateRed)^{\top} \succeq 0$ and the reduced Hamiltonian $\curlyHr$ is continuously differentiable. It remains to verify that the output equation of \eqref{equ:ROM_non_pH_structure} can be written in \pH form. 
Since ${\rm span}(\bmB)\subseteq{\rm span}(\funcV(\stateRed))$, there exists a matrix $\bm{M} \in \R^{\ROMsize \times \portsize}$ such that $\bmB =\funcV(\stateRed) \bm{M} $. Further, as $\funcV(\stateRed)^{\top}\funcW(\stateRed) = \mathbb{I}_r$, we get 
\begin{align}
	\label{equ:cond_on_B}
	\bmB^{\top}=  \bm{M}^{\top}  \funcV(\stateRed)^{\top} =  \bm{M}^{\top} \funcV(\stateRed)^{\top}\funcW(\stateRed)  \funcV(\stateRed)^{\top} = \bmB^{\top}\funcW(\stateRed) \funcV(\stateRed)^{\top}.
\end{align}		
Hence, we rewrite \eqref{equ:ROM_non_pH_structure_B} by	
\begin{align*}
	\bmB^{\top}\nabla_{\statex}\curlyH(\approxfunc(\stateRed)) =\bmB^{\top}\funcW(\stateRed) \funcV(\stateRed)^{\top}\nabla_{\statex}\curlyH(\approxfunc(\stateRed)) =\funcBr(\stateRed)^{\top}\nabla_{\stateRed}\curlyHr(\stateRed),
\end{align*}
where $\funcBr(\stateRed)\coloneqq\funcW(\stateRed)^{\top}\bmB \in \R^{\ROMsize \times \portsize}$. As a consequence, \eqref{equ:ROM_non_pH_structure} is an $\ROMsize$-dimensional \pH system which can be described by $\SysLable(\funcJr(\stateRed),\funcRr(\stateRed),\funcHr,\funcBr(\stateRed),\inputu,\outputRed,\stateRed_0)$.
\end{proof}

								
\section{State approximation and corresponding \ROMs}
\label{Sec:GMG_state_approx}

In this section, we discuss how to construct approximation maps $\approxfunc$ \eqref{equ:nonlinear_approximation} whose Jacobians satisfy the assumptions of Theorem \ref{Theo:pH_structure} by construction. We further present specific realizations of such approximation maps and the resulting \ROMs. To this end, we consider approximation maps of the form 
\begin{align}
\label{equ:general_embedding_map}
									\approxfunc(\stateRed) =\bmB \stateRed_1 + \Vbar \decfunc\left(\stateRed_2\right), \quad	\stateRed_1\in\R^{\portsize}, \quad 	\stateRed_2\in\R^{\ROMsize-\portsize}, \quad	\stateRed=\begin{bmatrix}\stateRed_1 \\ \stateRed_2\end{bmatrix},
\end{align}
where $\Vbar\in\R^{\FOMsize\times (\nlsize+\ROMsize-\portsize)}$ satisfies $\bmB^{\top} \Vbar= \bm{0}_{m \times (\nlsize+\ROMsize-\portsize)}$ and $\decfunc\in\ConFuncSet^1(\R^{\ROMsize-\portsize}, \R^{\nlsize+\ROMsize-\portsize})$. The dimension $r_n$, with $0\leq r_n \ll N$, is chosen according to the concrete realization of $\eta$. While $\eta$ may be any continuously differentiable nonlinear map, such as a neural network, we consider linear and quadratic approximation maps in this paper. These choices and the resulting \ROMs are discussed in Section \ref{Sec:state-approx_resulting_ROMs}. 
								
\subsection{Constructing \pH systems with state approximation}
								Theorem \ref{Theo:pH_structure} shows that the \GMG reduction map preserves the \pH structure provided that
\begin{enumerate}
	\item[(i)] ${\rm span}(\bmB)\subseteq {\rm span}(\JacRed)$ and 
	\item[(ii)] $\bmD_{\stateRed}\approxfunc(\stateRed)\in\matset_{\left(\funcJ-\funcR \right)^{-1}}$. 
\end{enumerate}
Due to the chosen structure of the approximation map \eqref{equ:general_embedding_map}, its Jacobian is given by
\begin{align}\label{eq:jac_approxfunc}
\bmD_{\stateRed}\approxfunc(\stateRed) = 
								\begin{bmatrix}
									\bmB,& \Vbar\bmD_{\stateRed_2}\decfunc(\stateRed_2)
								\end{bmatrix}\in \R^{\FOMsize \times \ROMsize}
\end{align}								
such that condition (i) is satisfied by construction. Moreover, $\funcBr$ is constant and given by 
\begin{align*}
\funcBr(\stateRed)\coloneq
								\begin{bmatrix}
									\mathbb{I}_{\portsize}\\
									\bzero_{\ROMsize-\portsize,\portsize}
								\end{bmatrix}, 
\end{align*}								
which follows from \eqref{equ:cond_on_B} together with ${\rm span}(\bmB)\subseteq {\rm span}(\JacRed)$.

\subsubsection{Existence of admissible approximation maps} \label{Sec:Existence_proofs}
As shown above, condition (i) of Theorem \ref{Theo:pH_structure} is satisfied by the construction of \eqref{equ:general_embedding_map}. Therefore, it remains to verify condition (ii). In this section, we show that the non-degeneracy condition $\bmD_{\stateRed}\approxfunc(\stateRed)\in\matset_{\left(\funcJ-\funcR \right)^{-1}}$ is generically satisfied. This condition is required to ensure that the \GMG reduction is well-defined. Since it is not immediately clear whether matrices satisfying this condition exist, we establish generic existence results in the following.
More precisely, we first show that existence results hold for almost all matrices $\bmU \in \R^{ \FOMsize \times \ROMsize}$. . 
Subsequently, we consider the case of $[\bmB,\bmU] \in \R^{ \FOMsize \times \ROMsize}$ satisfying $[\bmB,\bmU] \in\matset_{\left(\funcJ-\funcR \right)^{-1}}$ as this is the structure we observe in the Jacobian of $\approxfunc$, see \eqref{eq:jac_approxfunc}. For approximation maps with state-dependent Jacobians, this condition is then required to hold pointwise. The following lemma is used repeatedly in the existence proofs. 
 \begin{lemma}
		\label{lemma:analytic_measure}
		Let $f:\R^{\FOMsize}\rightarrow\R$ be a real analytic function and let $m_{\FOMsize}$ be the Lebesgue measure on $\R^{\FOMsize}$. If $m_{\FOMsize}(\{x\in\R^{\FOMsize}| f(x)=0\})>0$, then $f\equiv0$.
	\end{lemma}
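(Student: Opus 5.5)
We argue by induction on the dimension $\FOMsize$, proving the contrapositive: if $f\not\equiv 0$, then its zero set $Z(f)\coloneqq\{x\in\R^{\FOMsize}\mid f(x)=0\}$ has Lebesgue measure zero. The two ingredients are the identity theorem for real analytic functions of one variable and Fubini's theorem (equivalently, Tonelli's theorem for the indicator function of $Z(f)$). Note that $Z(f)$ is closed, since $f$ is continuous, hence measurable, so Fubini's theorem applies.

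\textbf{Base case $\FOMsize=1$.} Suppose $f:\R\to\R$ is real analytic and $m_1(Z(f))>0$. A set of positive measure in $\R$ is uncountable, so it has an accumulation point $x_0\in\R$; since $Z(f)$ is closed, $x_0\in Z(f)$. Expand $f$ in its convergent power series around $x_0$. If some coefficient were nonzero, the lowest such index $k$ would give $f(x)=(x-x_0)^k g(x)$ with $g$ analytic and $g(x_0)\neq 0$. Then $f$ would have no zeros in a punctured neighborhood of $x_0$, contradicting that $x_0$ is an accumulation point of zeros. Hence $f$ vanishes on a neighborhood of $x_0$. The set of points where all derivatives of $f$ vanish is closed (intersection of closed sets) and open (by the power series expansion), and it is nonempty. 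Since $\R$ is connected, it is all of $\R$, so $f\equiv 0$.

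\textbf{Inductive step.} Assume the claim holds in dimension $\FOMsize-1$, and write $x=(x',t)$ with $x'\in\R^{\FOMsize-1}$ and $t\in\R$. Suppose $f\not\equiv 0$. For each fixed $x'$, the map $t\mapsto f(x',t)$ is real analytic in one variable. Let
\begin{align*}
A\coloneqq\{x'\in\R^{\FOMsize-1}\mid f(x',t)=0 \ \text{for all } t\in\R\}.
\end{align*}
We show $A$ is a null set. Expand $f(x',t)=\sum_{k\ge0} c_k(x')\,t^k$ about $t=0$, where $c_k(x')=\tfrac{1}{k!}\partial_t^k f(x',0)$ is real analytic on $\R^{\FOMsize-1}$. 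By the one-variable identity theorem, $x'\in A$ if and only if $c_k(x')=0$ for all $k$, so $A=\bigcap_k Z(c_k)$.

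Because $f\not\equiv0$, some $c_k$ is not identically zero. If instead every $c_k$ vanished identically, $f$ would vanish on the hyperplane $\{t=0\}$ together with all its $t$-derivatives there. Then $f$ vanishes on a neighborhood of a point, and the connectedness argument from the base case, applied to the set where all partial derivatives of $f$ vanish, gives $f\equiv0$, a contradiction. By the induction hypothesis applied to that $c_k$, $m_{\FOMsize-1}(Z(c_k))=0$, hence $m_{\FOMsize-1}(A)=0$.

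For $x'\notin A$, the base case shows that the slice $\{t\mid f(x',t)=0\}$ has $m_1$-measure zero. Fubini's theorem then gives
\begin{align*}
m_{\FOMsize}(Z(f))=\int_{\R^{\FOMsize-1}} m_1\bigl(\{t\mid f(x',t)=0\}\bigr)\,{\rm d}x' = 0,
\end{align*}
because the integrand vanishes off the null set $A$. This contradicts $m_{\FOMsize}(Z(f))>0$, so $f\equiv0$.

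The main technical point is showing that $A$ is null. This requires expressing $A$ through the analytic coefficient functions $c_k$ and using the multivariate identity theorem to guarantee that some $c_k\not\equiv0$. The rest of the argument is routine measure theory.
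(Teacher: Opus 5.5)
Your proof is correct. It differs from the paper only in that the paper does not actually argue the lemma: it quotes Proposition~1 of Mityagin (a real analytic $f\not\equiv 0$ has a Lebesgue-null zero set) and takes the contrapositive. You instead prove that proposition yourself, by induction on the dimension, combining the one-variable identity theorem with Tonelli's theorem applied to the closed, hence Borel, set $Z(f)$. This is the standard argument behind the cited result. Your version is self-contained and makes explicit the measurability check that the citation leaves implicit, at the cost of length.

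Two small simplifications are available. First, the expansion $f(x',t)=\sum_k c_k(x')t^k$ converges only for $|t|$ small, with a radius depending on $x'$. This is harmless, because you only ever use $c_k(x')=\frac{1}{k!}\partial_t^k f(x',0)$ together with the one-variable identity theorem. Second, you do not need the multivariate identity theorem to get some $c_k\not\equiv 0$. It follows directly from your own characterization $A=\bigcap_k Z(c_k)$: if every $c_k\equiv 0$, then $A=\mathbb{R}^{N-1}$, which says exactly that $f\equiv 0$. This also sidesteps the slightly compressed step where vanishing of the $t$-derivatives on the hyperplane is upgraded to vanishing of all mixed partial derivatives.
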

\begin{proof}
	Proposition 1 in \cite{Mityagin2020} states that if $f \not\equiv 0$, then $m_{\FOMsize}(\{x\in\R^{\FOMsize}| f(x)=0\})=0$. Taking the contrapositive yields the claim.
\end{proof}	
Throughout this section, let $\bmJ=-\bmJ^{\top}\in\R^{\FOMsize\times \FOMsize}$  be skew-symmetric and let $\bmR=\bmR^{\top} \in\R^{\FOMsize\times\FOMsize}$ be positive semi-definite. We further assume that $\bmJ-\bmR$ is invertible and define
\begin{equation}
		\label{equ:GMG_defi_Jcheck_Rcheck}
				\Jcheck \coloneq \frac{1}{2}\left((\bmJ-\bmR)^{-1} - (\bmJ-\bmR)^{-\top}\right)\ \quad \mathrm{and}\quad\Rcheck \coloneq
				\frac{1}{2} \left(-(\bmJ-\bmR)^{-1} -(\bmJ-\bmR)^{-\top}\right).
			\end{equation}
			 With these definitions, we have 
			\begin{equation*}
				\Jcheck-\Rcheck=(\bmJ-\bmR)^{-1},\quad
				\Jcheck = -\Jcheck^{\top}, \quad \text{and}
				\quad\Rcheck = \Rcheck^{\top}.
			\end{equation*}
			We further have $\Rcheck \succeq 0$, since for any $\statex\in\R^{\FOMsize}$
			\begin{equation*}
				\begin{aligned}
					0&\leq
					\statex^{\top}
					(\bmJ-\bmR)^{-\top}
					\bmR
					(\bmJ-\bmR)^{-1}
					\statex
					=
					-\statex^{\top}
					(\bmJ-\bmR)^{-\top}
					(\bmJ-\bmR)
					(\bmJ-\bmR)^{-1}
					\statex\\
					&=
					-\statex^{\top}
					(\Jcheck-\Rcheck)^{\top}
					\statex
					=
					\statex^{\top}
					\Rcheck
					\statex.\\
	\end{aligned}	
\end{equation*}			

\begin{theorem}
\label{Theo:exists_JR_inv}
			Let $\bmJ=-\bmJ^{\top}\in\R^{\FOMsize\times \FOMsize}$ be a skew-symmetric matrix and let $\bmR=\bmR^{\top}\in\R^{\FOMsize\times\FOMsize}$ be a positive semi-definite matrix. If $\bmR\neq\bzero$, i.e., $\bmR$ is not the zero matrix, and the matrix $(\bmJ-\bmR)$ is invertible, then for almost every matrix $\bmU\in \R^{\FOMsize\times\ROMsize}, 1\leq\ROMsize\leq\FOMsize$, it holds that $\bmU\in\matset_{(\bmJ-\bmR)^{-1}}$, i.e., $\mathrm{det}\left(\bmU^{\top}(\bmJ-\bmR)^{-1}\bmU\right)\neq 0$.
\end{theorem}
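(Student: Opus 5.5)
The plan is to reduce the statement to exhibiting a single good matrix, using Lemma \ref{lemma:analytic_measure}. Set $\bm{M}\coloneq(\bmJ-\bmR)^{-1}=\Jcheck-\Rcheck$ and, for fixed $\ROMsize$, consider $f(\bmU)\coloneq\det(\bmU^{\top}\bm{M}\bmU)$ on $\R^{\FOMsize\times\ROMsize}\cong\R^{\FOMsize\ROMsize}$. The function $f$ is a polynomial in the entries of $\bmU$, hence real analytic. By Lemma \ref{lemma:analytic_measure}, either $f\equiv 0$ or its zero set is Lebesgue-null. So it suffices to find, for each $1\le\ROMsize\le\FOMsize$, one $\bmU_0$ with $f(\bmU_0)\neq0$.

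First I will record that $\Rcheck=(\bmJ-\bmR)^{-\top}\bmR(\bmJ-\bmR)^{-1}$. The computation preceding the theorem gives equality of the two quadratic forms, and both matrices are symmetric, so they coincide. Hence $\Rcheck\succeq0$, and $\Rcheck\neq\bzero$ because $\bmR\neq\bzero$ and $\bmJ-\bmR$ is invertible. This is exactly where the hypothesis $\bmR\neq\bzero$ enters. Without it $\bm{M}$ would be skew, and for odd $\ROMsize$ every $\bmU^{\top}\bm{M}\bmU$ would be singular.

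Next I build $\bmU_0$ by a downward induction on $\ROMsize$, starting from $\ROMsize=\FOMsize$ with $\bmU_0=\mathbb{I}_{\FOMsize}$, which works since $\det\bm{M}\neq0$. I carry along the invariant that the restricted form $\bm{A}\coloneq\bmU_0^{\top}\bm{M}\bmU_0\in\R^{\ROMsize\times\ROMsize}$ is invertible and has nonzero symmetric part $-\bmU_0^{\top}\Rcheck\bmU_0\preceq0$. For the step from $\ROMsize$ to $\ROMsize-1$, note that the symmetric part of $\bm{A}^{-1}$ equals $-\bm{A}^{-\top}(\bmU_0^{\top}\Rcheck\bmU_0)\bm{A}^{-1}$. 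This is nonzero and negative semidefinite, so some $\bm{v}$ satisfies $\bm{v}^{\top}\bm{A}^{-1}\bm{v}\neq0$, and this holds for an open dense set of $\bm{v}$. I change the basis of $\R^{\ROMsize}$ by an invertible $\bm{T}$ so that $\bm{v}$ becomes the last column of $\bm{T}^{-\top}$. By the cofactor (Jacobi) identity $\det(\bm{A}_{\hat{\imath}\hat{\imath}})=\det(\bm{A})\,(\bm{A}^{-1})_{ii}$, deleting the last column of $\bmU_0\bm{T}$ then yields an $\FOMsize\times(\ROMsize-1)$ matrix whose restricted form is invertible.

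The main obstacle is keeping the invariant $\bmU_0^{\top}\Rcheck\bmU_0\neq\bzero$ through the induction, so that the next step is again possible. The $(\ROMsize-1)$-dimensional subspace must not lie inside $\ker(\bmU_0^{\top}\Rcheck\bmU_0)$. I will handle this with a genericity argument. The condition $\bm{v}^{\top}\bm{A}^{-1}\bm{v}\neq0$ is open and dense in $\bm{v}$. The requirement that the hyperplane obtained by deletion still meets the complement of that kernel is also open and dense, and it holds automatically when $\ROMsize-1\ge1$ and the kernel is a proper subspace. Choosing $\bm{v}$ and the remaining columns of $\bm{T}$ in the intersection of these dense open sets preserves the invariant. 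Alternatively, one can always keep in the subspace a fixed vector $\bm{u}_1$ with $\bm{u}_1^{\top}\Rcheck\bm{u}_1>0$, which guarantees the invariant directly. For $\ROMsize=1$ this already gives the base $\bm{u}_1^{\top}\bm{M}\bm{u}_1=-\bm{u}_1^{\top}\Rcheck\bm{u}_1\neq0$.
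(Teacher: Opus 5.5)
Your proof is correct. The reduction step is the same as the paper's: $f$ is a polynomial, so by Lemma~\ref{lemma:analytic_measure} one good $\bmU$ per $\ROMsize$ suffices. The witness, however, is produced quite differently.

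The paper builds $\bmU$ directly for each $\ROMsize$ from the Cholesky-like normal form $\bmV^{\top}\Jcheck\bmV=\mathrm{diag}(\jtwo,\ldots,\jtwo,0,\ldots,0)$. If $\det\Jcheck=0$, it takes $2i$ paired columns plus $j$ kernel vectors of $\Jcheck$ that diagonalize $\Rcheck$, with $\ROMsize=2i+j$. If $\det\Jcheck\neq0$, it takes $\bmv_1,\ldots,\bmv_{\ROMsize}$ for even $\ROMsize$; for odd $\ROMsize$ it swaps in a $\bmv_{\FOMsize}$ with $\bmv_{\FOMsize}^{\top}\Rcheck\bmv_{\FOMsize}>0$. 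Nondegeneracy then follows from Lemma~\ref{lemma:ker_JR}.

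You instead deflate from $\mathbb{I}_{\FOMsize}$ one dimension at a time. You use Jacobi's identity, i.e.\ the restriction of a nondegenerate form $\bm{A}$ to $\bmv^{\perp}$ stays nondegenerate iff $\bmv^{\top}\bm{A}^{-1}\bmv\neq0$. You combine it with the observation that inversion preserves ``nonzero negative semidefinite symmetric part''. Your route is self-contained and needs neither the skew-symmetric normal form nor Lemma~\ref{lemma:ker_JR}. It avoids the case split on $\det\Jcheck$ and on the parity of $\ROMsize$, and it yields a nested chain of admissible subspaces for all $\ROMsize$ at once. The paper's route, once the factorization is available, gives an explicit, non-recursive $\bmU$ for each $\ROMsize$.

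Two corrections to your last paragraph.

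\textbf{The invariant step is not automatic.} If $\ker(\bmU_0^{\top}\Rcheck\bmU_0)$ has dimension $\ROMsize-1$, exactly one hyperplane (that kernel itself) is excluded, so only your ``open and dense'' formulation is right. Also, the retained subspace is always $\bmv^{\perp}$ in coordinates, so only $\bmv$ has to be chosen generically; the other columns of $\bm{T}$ are irrelevant.

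\textbf{The proposed alternative fails in general.} Keeping a fixed $\bm{u}_1$ with $\bm{u}_1^{\top}\Rcheck\bm{u}_1>0$ in every subspace does not work. Take $\FOMsize=3$, let $\bmJ$ be the $2\times 2$ rotation block on the first two coordinates, and set $\bmR=\bm{e}_3\bm{e}_3^{\top}$. Then $(\bmJ-\bmR)^{-1}$ is block diagonal with a skew $2\times2$ block and the entry $-1$, and $\Rcheck=\bm{e}_3\bm{e}_3^{\top}$. With $\bm{u}_1=\bm{e}_3$, every plane $\mathrm{span}(\bm{e}_3,\bm{w})$ with $\bm{w}\in\mathrm{span}(\bm{e}_1,\bm{e}_2)$ yields the singular matrix $\mathrm{diag}(-1,0)$. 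So no admissible $2$-dimensional subspace contains $\bm{u}_1$. Drop that alternative and rely on the generic choice of $\bmv$, which is sound.
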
	
\begin{proof}
For any $1\le\ROMsize\leq \FOMsize$, let $f_{\mathrm{det},\ROMsize,1}\colon\R^{\FOMsize\times\ROMsize}\rightarrow \R$ be defined by 
			\begin{align*}
				f_{\mathrm{det},\ROMsize,1}\colon\bmU\mapsto \allowbreak \mathrm{det}\left(\bmU^{\top}\left(\Jcheck-\Rcheck\right)\bmU\right).
			\end{align*}
			Note that $f_{\mathrm{det},\ROMsize,1}$ is a polynomial with respect to the entries of $\bmU$. This, means that $f_{\mathrm{det},\ROMsize,1}$ is a real analytic function.
			By Lemma~\ref{lemma:analytic_measure}, it is sufficient to show that $f_{\mathrm{det},\ROMsize,1}$ is not identically zero. Indeed, if $f_{\mathrm{det},\ROMsize,1} \not\equiv 0$, then its zero set has Lebesgue measure zero. Therefore 
			\begin{equation*}
				\mathrm{det}\left(\bmU^{\top}\left(\Jcheck-\Rcheck\right)\bmU\right)\neq 0,
			\end{equation*}
			holds for almost every $ \bmU\in\R^{\FOMsize\times \ROMsize} $ for $\ROMsize=1,\ldots,\FOMsize$. It remains to construct such a $\bmU$ such that $\mathrm{det}\left(\bmU^{\top}\left(\Jcheck-\Rcheck\right)\bmU\right)\neq 0,$ The explicit construction is moved to  Appendix \ref{sec_app_proofs}. 
\end{proof}
					
\begin{theorem}
\label{Theo:exists_JR_inv_BVbar}
Let $\bmJ=-\bmJ^{\top}\in\R^{\FOMsize\times \FOMsize}$ be skew-symmetric, let $\bmR=\bmR^{\top} \in\R^{\FOMsize\times\FOMsize}$ be positive semi-definite with $\bmR \neq \bm{0}$ and assume that $(\bmJ-\bmR)$ is invertible. Let $\bmB\in\R^{\FOMsize\times\portsize}$ be of full column rank and define 
\begin{align*}
	\ell := \mathrm{rank}\left(\bmB, (\bmJ-\bmR)^{-1}\bmB \right).
\end{align*}
Furthermore, let $\Jcheck,\Rcheck \in \R^{\FOMsize \times \FOMsize}$ be defined as in \eqref{equ:GMG_defi_Jcheck_Rcheck}. Assume that 
\begin{align} \label{eq:full_space_assumption}
			 \mathrm{ker}(\Rcheck)\cup
			 \mathrm{span}(\bmB)\cup
			 \mathrm{span}((\bmJ-\bmR)^{-1}\bmB)
			 \neq
			 \R^{\FOMsize},
\end{align}			 
and that the matrices $\bmB^{\top}(\bmJ-\bmR)^{-1}\bmB$ and $\bmB^{\top}\bmR\bmB$ are invertible. Then, for almost every matrix $\bmU\in \R^{\FOMsize\times\ROMsize} (\ROMsize\leq\FOMsize-\ell)$, it holds that 
\begin{align*}
	\begin{bmatrix}
				\bmB,\bmU
			\end{bmatrix}\in\matset_{(\bmJ-\bmR)^{-1}},
\end{align*}
i.e., 
\begin{align*}
\mathrm{det}\left(\begin{bmatrix}
				\bmB,\bmU
			\end{bmatrix}^{\top}(\bmJ-\bmR)^{-1}\begin{bmatrix}
				\bmB,\bmU
			\end{bmatrix}\right)\neq 0.
\end{align*}
\end{theorem}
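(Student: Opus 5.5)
The approach follows the pattern of the proof of Theorem~\ref{Theo:exists_JR_inv}. Define the polynomial map
\begin{align*}
f_{\bmB}\colon\R^{\FOMsize\times\ROMsize}\to\R,\qquad \bmU\mapsto \mathrm{det}\left(\begin{bmatrix}\bmB,\bmU\end{bmatrix}^{\top}(\Jcheck-\Rcheck)\begin{bmatrix}\bmB,\bmU\end{bmatrix}\right).
\end{align*}
Its entries are polynomial in those of $\bmU$, so it is real analytic. By Lemma~\ref{lemma:analytic_measure}, its zero set has measure zero as soon as $f_{\bmB}\not\equiv 0$. The whole task therefore reduces to exhibiting a single $\bmU\in\R^{\FOMsize\times\ROMsize}$ with $f_{\bmB}(\bmU)\neq 0$, for every $\ROMsize\le \FOMsize-\ell$.

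To find such a $\bmU$, I would factor $f_{\bmB}$ with a Schur complement. Write $\bmM:=(\bmJ-\bmR)^{-1}=\Jcheck-\Rcheck$ and $\bm{A}:=\bmB^{\top}\bmM\bmB$, which is invertible by assumption. Then
\begin{align*}
f_{\bmB}(\bmU)=\mathrm{det}(\bm{A})\,\mathrm{det}\left(\bmU^{\top}\bm{P}\bmU\right),\qquad \bm{P}:=\bmM-\bmM\bmB\bm{A}^{-1}\bmB^{\top}\bmM .
\end{align*}
The first simplification is to choose the columns of $\bmU$ so that the coupling blocks $\bmB^{\top}\bmM\bmU$ and $\bmU^{\top}\bmM\bmB$ vanish. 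In that case $\bmU^{\top}\bm{P}\bmU=\bmU^{\top}\bmM\bmU=\bmU^{\top}\Jcheck\bmU-\bmU^{\top}\Rcheck\bmU$.

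This sum is a skew-symmetric matrix minus a symmetric positive semi-definite matrix. If $\bmU^{\top}\Rcheck\bmU\succ 0$, it is invertible, since $z^{\top}\bmU^{\top}\bmM\bmU z=-z^{\top}\bmU^{\top}\Rcheck\bmU z<0$ for $z\neq 0$. So it suffices to find an $\ROMsize$-dimensional subspace $\mathcal{W}$ with two properties: $\mathcal{W}$ is annihilated by the constraints coming from $\bmM\bmB$ and $\bmM^{\top}\bmB$, and $\Rcheck$ is positive definite on $\mathcal{W}$. The count $\FOMsize-\ell$ should be exactly the dimension left after imposing these constraints. To reconcile $\mathrm{span}(\bmM^{\top}\bmB)$ with the quantity $\ell=\mathrm{rank}(\bmB,\bmM\bmB)$ in the statement, I would use the identities $\bmM^{\top}=\bmM(\bmJ-\bmR)\bmM^{\top}$ together with $\bmM+\bmM^{\top}=-2\Rcheck$.

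The main obstacle is this last step: guaranteeing that $\Rcheck$ is definite on a full $\ROMsize$-dimensional subspace of the constrained space, given that $\Rcheck$ may have a large kernel. Here I would use assumption~\eqref{eq:full_space_assumption} and the invertibility of $\bmB^{\top}\bmR\bmB$. The assumption provides a vector outside $\ker(\Rcheck)\cup\mathrm{span}(\bmB)\cup\mathrm{span}(\bmM\bmB)$. Since a finite union of proper subspaces has measure zero, such vectors are generic.

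If definiteness on a whole subspace cannot be forced, my fallback is to drop the orthogonality choice and argue by induction on $\ROMsize$ with the polynomial $g_{\ROMsize}(\bmU)=\mathrm{det}(\bmU^{\top}\bm{P}\bmU)$. Given a $\bmU$ of $\ROMsize-1$ columns with $g_{\ROMsize-1}(\bmU)\neq 0$, appending a column $u$ gives a bordered determinant $g_{\ROMsize-1}(\bmU)\cdot q(u)$, where $q$ is a quadratic form in $u$. I would show that $q\not\equiv 0$ on $\R^{\FOMsize}$ as long as $\ROMsize\le\FOMsize-\ell$, using that $\bm{P}$ annihilates $\mathrm{span}(\bmB)$ and has rank at least $\FOMsize-\ell$. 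A generic $u$ then extends $\bmU$, and the induction closes the argument.
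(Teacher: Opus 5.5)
Your main route fails at the step you flagged, and no choice of subspace can rescue it. Since $\Rcheck=\bmM^{\top}\bmR\bmM$ with $\bmM=(\bmJ-\bmR)^{-1}$, we have $\mathrm{rank}(\Rcheck)=\mathrm{rank}(\bmR)$. Hence $\bmU^{\top}\Rcheck\bmU\succ 0$ forces $\ROMsize\le\mathrm{rank}(\bmR)$. In the paper's linear mass--spring--damper example $\mathrm{rank}(\bmR)=\FOMsize/2$, whereas $\FOMsize-\ell\ge\FOMsize-2$, and no hypothesis involving $\bmB$ changes this bound. The missing idea is to let the skew part carry the dimension. By Lemma~\ref{lemma:ker_JR}, $\bmU^{\top}(\Jcheck-\Rcheck)\bmU$ is invertible as soon as $\ker(\bmU^{\top}\Jcheck\bmU)\cap\ker(\bmU^{\top}\Rcheck\bmU)=\{\bzero\}$. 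This is how Theorem~\ref{Theo:exists_JR_inv} is proved: $\Jcheck$-pairs come from a Cholesky-like factorization, and $\Rcheck$ is needed only on the leftover directions. Your two-sided orthogonality is unnecessary, because $\bmU^{\top}\bmM\bmB=\bzero$ alone makes the matrix block triangular with determinant $\det(\bmB^{\top}\bmM\bmB)\det(\bmU^{\top}\bmM\bmU)$. It is also harmful, because $\mathrm{span}(\bmM\bmB,\bmM^{\top}\bmB)$ can have dimension larger than $\ell$. For example, take $\FOMsize=2$, $\bmB=e_2$, $\bmJ=\begin{bmatrix}0&1\\-1&0\end{bmatrix}$, $\bmR=\begin{bmatrix}1&1\\1&1\end{bmatrix}$. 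Then $\bmM\bmB=(0,-1)^{\top}$, so $\ell=1$, but $\bmM^{\top}\bmB=(2,-1)^{\top}$. Your constraint space is therefore $\{\bzero\}$, although $f_{\bmB}(u)=u_1^2\not\equiv 0$. The paper uses the one-sided version. Its $\bmW$ spans $\mathrm{span}(\bmB,\bmM\bmB)^{\perp}$, it sets $\bmU=\bmW\bmK$, and the determinant factors as $\det(\bmB^{\top}\bmM\bmB)\det(\bmK^{\top}\bmW^{\top}\bmM\bmW\bmK)$. After arguing that $\bmW^{\top}\bmM\bmW$ is invertible and $\bmW^{\top}\Rcheck\bmW\neq\bzero$, it applies Theorem~\ref{Theo:exists_JR_inv} to the compressed pair.

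Your fallback has the same blind spot. Knowing that $\bm{P}$ annihilates $\mathrm{span}(\bmB)$ and has rank at least $\FOMsize-\ell$ says nothing about $q(u)=u^{\top}\bm{Q}u$, which vanishes identically for every skew-symmetric $\bm{Q}$ of any rank; the dissipative part must enter the argument. This happens under the stated hypotheses. Take $\FOMsize=3$, $\bmB=e_1$, $\bmR=\mathrm{diag}(1,0,0)$ and $\bmJ=\begin{bmatrix}0&0&1\\0&0&1\\-1&-1&0\end{bmatrix}$. Then $\bmM=\begin{bmatrix}-1&1&0\\1&-1&-1\\0&1&0\end{bmatrix}$ and $\bmM\bmB=\bmM^{\top}\bmB=v:=(-1,1,0)^{\top}$, so $\ell=2$. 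Further, $\bmB^{\top}\bmM\bmB=-1$, $\bmB^{\top}\bmR\bmB=1$ and $\Rcheck=vv^{\top}$. The matrix $\bm{P}=\bmM+vv^{\top}$ is skew-symmetric of rank $2$, hence $f_{\bmB}\equiv 0$ for $\ROMsize=1=\FOMsize-\ell$. Read literally, \eqref{eq:full_space_assumption} holds here. In fact it holds whenever $\bmR\neq\bzero$ and $\portsize<\FOMsize$, since finitely many proper subspaces never cover $\R^{\FOMsize}$. So the statement as written cannot be proved, and a genuine condition that $\Rcheck$ not vanish on the compressed subspace is needed. The paper effectively uses its assumption in that form, namely $\mathrm{span}(\bmW)\not\subseteq\ker(\Rcheck)$. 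This follows from the sum condition $\ker(\Rcheck)+\mathrm{span}(\bmB,\bmM\bmB)\neq\R^{\FOMsize}$, but not from the union. Even under the sum condition, the paper's invertibility claim for $\bmW^{\top}\bmM\bmW$ is unjustified: its orthogonality relations only give $\bmW\statex\in\mathrm{span}(\bmB,(\bmJ-\bmR)\bmB)$. Append to the example a fourth coordinate with $\bmJ$-entry $0$ and $\bmR$-entry $1$. The sum condition then holds, yet $\bmW=[e_3,e_4]$ and $\bmW^{\top}\bmM\bmW=\mathrm{diag}(0,-1)$. Your Schur-complement setup points to a clean repair. Let the columns of $\bm{Z}$ form a basis of $(\bmM\bmB)^{\perp}$, so that $\bm{Z}^{\top}\bmM\bmB=\bzero$. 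Then $\bm{Z}^{\top}\bmM\bm{Z}$ is automatically invertible, because $\bm{Z}^{\top}\bmM\bm{Z}c=\bzero$ forces $\bm{Z}c\in\mathrm{span}(\bmB)\cap(\bmM\bmB)^{\perp}=\{\bzero\}$ when $\bmB^{\top}\bmM\bmB$ is invertible. Applying Theorem~\ref{Theo:exists_JR_inv} to this compressed pencil, as the paper does with $\Jcheck_1,\Rcheck_1$, gives the conclusion for all $\ROMsize\le\FOMsize-\portsize$ whenever $\bm{Z}^{\top}\Rcheck\bm{Z}\neq\bzero$. This holds, e.g., whenever $\mathrm{rank}(\bmR)>\portsize$.
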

\begin{proof}
For any $1\le\ROMsize\leq \FOMsize-\ell$, let $f_{\mathrm{det},\ROMsize,2}\colon\R^{\FOMsize\times\ROMsize}\rightarrow \R$ be defined by 
			\begin{align*}
				f_{\mathrm{det},\ROMsize,2}(\bmU)\coloneq \mathrm{det}\left(\begin{bmatrix} \bmB,\bmU \end{bmatrix}^{\top} \left(\Jcheck-\Rcheck\right) \begin{bmatrix} \bmB,\bmU \end{bmatrix}\right).
			\end{align*}
We note that $f_{\mathrm{det},\ROMsize,2}$ is a polynomial with respect to the entries of $\bmU$, i.e. $f_{\mathrm{det},\ROMsize,2}$ is a real analytic function. By Lemma~\ref{lemma:analytic_measure}, it is sufficient to show that $f_{\mathrm{det},\ROMsize,2}$ is not identically zero. Equivalently, it remains to construct a matrix  $ \bmU\in\R^{\FOMsize\times \ROMsize} $ for $\ROMsize=1,\ldots,\FOMsize-\ell$ such that
			\begin{equation*}
				\mathrm{det}\left(\begin{bmatrix} \bmB,\bmU \end{bmatrix}^{\top} \left(\Jcheck-\Rcheck\right) \begin{bmatrix} \bmB,\bmU \end{bmatrix}\right) \neq 0. 
\end{equation*}
Let $\bmW\in\R^{\FOMsize\times(\FOMsize-\ell)}$ be a matrix whose columns form an orthogonal complement of \\$\textnormal{span}(\bmB,(\Jcheck-\Rcheck)\bmB)$. 
			This means we have 
			\begin{align}
				\label{equ:exists_JR_inv_BVbar_orth_comp}
				\bmB^\top\bmW = \bm{0},\
				\bmB^\top (\Jcheck-\Rcheck)^\top \bmW = \bm{0},\ \textnormal{ and }
				\mathrm{span}(
					\bmB, (\Jcheck-\Rcheck)\bmB, \bmW
				) =\R^{\FOMsize}.
			\end{align}
			Define
			$
			\bmJ_1 \coloneq  \bmW^{\top}\Jcheck\bmW$ and
			$\bmR_1 \coloneq  \bmW^{\top}\Rcheck\bmW.
			$
			Further, assume that $\mathrm{det}(\bmJ_1-\bmR_1)= 0$. Then, there exists a nonzero vector $\statex\in\R^k$ s.t.
				$(\bmJ_1-\bmR_1)\statex = \bzero$.
			From the latter equation, we obtain
			\begin{equation}
				\label{equ:GMG_exist_Orth_cond}
				\bmW^\top (\Jcheck-\Rcheck)\bmW\statex =\bm{0}\ \textnormal{ and }
				\statex^{\top}\bmW^{\top}\Rcheck\bmW\statex\allowbreak= 0.
			\end{equation}
			From \eqref{equ:exists_JR_inv_BVbar_orth_comp} and \eqref{equ:GMG_exist_Orth_cond}, we get 
			\begin{align} \label{eq:orthcond}(\Jcheck-\Rcheck)^{\top}\bmW\perp \textnormal{span}\left( \bmW\statex, \bmB, (\bmJ-\bmR) \bmB\right), \end{align}
			where we used used
			\begin{align*}
				\bm{0} = \bmB^\top\bmW = \bmB^\top (\bmJ-\bmR)^\top (\bmJ-\bmR)^{-\top}  \bmW = \bmB^\top (\bmJ-\bmR)^\top (\Jcheck-\Rcheck)^{\top}  \bmW.
			\end{align*}
			We have that the dimension of the space $\textnormal{span}(\bmB,(\Jcheck-\Rcheck)\bmB) = \textnormal{span}(\bmB,(\bmJ-\bmR)^{-1}\bmB)$. Then, due to $\textnormal{det}(\bmJ-\bmR)\neq 0$, it follows that the dimension of the space $\textnormal{span}((\bmJ-\bmR)\bmB, \bmB)$ is $\ell$. Further, since the dimension of the space $\textnormal{span} ((\Jcheck-\Rcheck)^{\top}\bmW)$ is $\FOMsize-\ell$, it follows due to the orthogonality conditions \eqref{eq:orthcond} that 				
			$$
				\mathrm{rank}\left(
				\begin{bmatrix}
					(\bmJ-\bmR)\bmB, \bmB, (\Jcheck-\Rcheck)^{\top}\bmW
				\end{bmatrix}
				\right) =\FOMsize.$$
				From the latter rank argument combined with \eqref{eq:orthcond}, we obtain $\bmW\statex\in\mathrm{span}\left((\bmJ-\bmR)\bmB\right)$. 
			
			If $\ell=\portsize$, then
			  we get $
			\mathrm{dim}\left(\textnormal{span}
				((\bmJ-\bmR)\bmB, \bmB))\right)
			=\portsize.
			$ 
			Since the matrix $\bmB$ is assumed to have full column rank, we arrive at $\bmW\statex\in \mathrm{span}
			\left((\bmJ-\bmR)\bmB\right)\subseteq
			\mathrm{span}(\bmB).
			$
			This yields $\bmW\statex\in \textnormal{span}(\bmW)\cap \textnormal{span}( \bmB)= \{ \bm{0} \}.$
			Thus, we have $\bmW\statex=\bm{0}$, yielding $\statex=\bm{0}$. However, this is a contradiction since we assume $\statex\neq\bm{0}$ and it follows that $\mathrm{det}(\bmJ_1-\bmR_1)\neq0. $
			
		    We now consider the remaining case $\ell\neq\portsize$.
		    We know that  $\exists\stateapprox\in\R^{\portsize}$ s.t. $\bmW\statex=(\bmJ-\bmR)\bmB\stateapprox$ and with \eqref{equ:GMG_exist_Orth_cond} we arrive at
			\begin{align*}
				0 =&\statex^{\top}\bmW^{\top}\Rcheck\bmW\statex
				=\stateapprox^{\top}\bmB^{\top}(\bmJ-\bmR)^{\top}
				\Rcheck
				(\bmJ-\bmR)\bmB\stateapprox\\
				=&
				-\stateapprox^{\top}\bmB^{\top}(\bmJ-\bmR)^{\top}
				(\Jcheck-\Rcheck)
				(\bmJ-\bmR)\bmB\stateapprox\\
				=&
				-\stateapprox^{\top}\bmB^{\top}(\bmJ-\bmR)^{\top}
				\bmB\stateapprox
				=
				\stateapprox^{\top}\bmB^{\top}\bmR
				\bmB\stateapprox.
			\end{align*}
			Since $\bmB^{\top}\bmR\bmB$ is non-degenerate and $\bmW$ is of full column rank, we get $\statex= \bzero$, which is again a contradiction to our assumption of a nonzero $\statex$. Also in this case we arrive at $\mathrm{det}(\bmJ_1-\bmR_1)\neq0.$
			
Recall the assumption \eqref{eq:full_space_assumption}. Since $\textnormal{span}(\bmW)$ is by construction the orthogonal complement of $\mathrm{span}(
					\bmB, (\Jcheck-\Rcheck)\bmB)$ \eqref{equ:exists_JR_inv_BVbar_orth_comp}, it follows that $\textnormal{span}(\bmW) \not\subseteq \mathrm{ker}(\Rcheck)$. Thus, there exists $\bm{w} \in \bmW$ such that $\bm{w}^{\top} \Rcheck \bm{w} >0$ and we obtain $\bmR_1 = \bmW^{\top}\Rcheck\bmW \neq \bzero$.	
					
Since $(\bmJ_1-\bmR_1)$ is non-degenerate and $\bmR_1 \neq \bzero$, we can construct $\Jcheck_1,\Rcheck_1\in\R^{(\FOMsize-\ell)\times(\FOMsize-\ell)}$ based on \eqref{equ:GMG_defi_Jcheck_Rcheck} s.t.
			\begin{equation*}
				\Jcheck_1-\Rcheck_1 = (\bmJ_1-\bmR_1)^{-1},\quad
				\Jcheck_1=-\Jcheck_1^{\top},\quad
				\Rcheck_1 =\Rcheck_1^{\top}\succeq 0,\quad
				\Rcheck_1\neq \bzero.				
			\end{equation*}
By applying Theorem \ref{Theo:exists_JR_inv} to $\Jcheck_1, \Rcheck_1$, we know that for almost every matrix $\bmK\in\R^{(\FOMsize-\ell)\times \ROMsize}$, $1 \leq \ROMsize\leq(\FOMsize-\ell)$, it holds that $\mathrm{det}(\bmK^{\top}(\Jcheck_1-\Rcheck_1)^{-1}\bmK)\neq 0$.
			Finally, we select $\bmK\in\matset_{(\Jcheck_1-\Rcheck_1)^{-1}}$, set $\bmU= \bmW\bmK$ and we can show that $[\bmB,\bmU]\in\matset_{(\bmJ-\bmR)^{-1}}$ by the following equation
			\begin{align*}
				&\mathrm{det}\left(
				\begin{bmatrix}
					\bmB,\bmW\bmK
				\end{bmatrix}^{\top}
				(\bmJ-\bmR)^{-1}
				\begin{bmatrix}
					\bmB,\bmW\bmK
				\end{bmatrix}
				\right)
				\\
				=
				&
				\mathrm{det}\left(
				\begin{bmatrix}
					\bmB^{\top}	(\bmJ-\bmR)^{-1}\bmB&
					\bmB^{\top}	(\bmJ-\bmR)^{-1}\bmW\bmK\\
					\bmK^{\top}\bmW^{\top}(\bmJ-\bmR)^{-1}\bmB&
					\bmK^{\top}(\bmW^{\top}(\bmJ-\bmR)^{-1}\bmW)\bmK
				\end{bmatrix}
				\right)
				\\
				=
				&
				\mathrm{det}\left(
				\begin{bmatrix}
					\bmB^{\top}	(\bmJ-\bmR)^{-1}\bmB&
					\bmB^{\top}	(\bmJ-\bmR)^{-1}\bmW\bmK\\
					\bzero&
					\bmK^{\top}(\bmJ_1-\bmR_1)\bmK
				\end{bmatrix}
				\right)
				\\
				=
				&
				\mathrm{det}\left(\bmB^{\top}	(\bmJ-\bmR)^{-1}\bmB\right)
				\mathrm{det}\left(\bmK^{\top}(\Jcheck_1-\Rcheck_1)^{-1}\bmK\right)
				\neq
				0.
\end{align*}							
\end{proof}					
		
\begin{remark}
Theorem \ref{Theo:exists_JR_inv_BVbar} extends Proposition 2.1 of \cite{GruberTezaur2025} from Hamiltonian systems to \pH systems. In particular, the condition $\mathrm{det}\left(\bmU^{\top}\bmJ\bmU\right)\neq 0$ is replaced by the condition $\mathrm{det}\left(\bmU^{\top}(\bmJ-\bmR)^{-1}\bmU\right)\neq 0$ allowing for arbitrary skew-symmetric $\bmJ$, positive semi-definite $\bmR$ and even and odd dimensions of $\ROMsize$.
\end{remark}					
								
\subsection{State-approximations and resulting \ROMs} \label{Sec:state-approx_resulting_ROMs}
To determine the unknown matrices and functions of the approximation map $\approxfunc$, we use a data-driven approach. To this end, we introduce an equidistant discretization of the time interval $[t_0, t_{\text{end}}]$ with step size $\Delta t := (t_{\text{end}} - t_0)/\snapsize$, where $\snapsize \in \mathbb{N}$. Then, for $t_i \coloneq  t_0 + i \Delta t $, $0 \le i \le \snapsize$, we seek approximations $\statex^i \approx \statex(t_i)$, $\outputy^i \approx \outputy(t_i)$.

Evaluating the \pH system \eqref{equ:Nonlinear_FOM_pH} at these time instances yields the snapshot matrix   
\begin{align}
\label{equ:snap_mat_difi}
	\bmX : =
	\begin{bmatrix}
		\statex^0,&\ldots,&\statex^{\snapsize} 
	\end{bmatrix}\in\R^{\FOMsize\times (\snapsize+1)}.
\end{align}
In the following, we discuss particular realizations of the approximation map \eqref{equ:nonlinear_approximation}  leading to (i) linear (Section \ref{Sec:GMGPODROM}) and (ii) quadratic approximation maps (Section \ref{Sec:GMGQMROM}), together with the corresponding \ROMs.
								
								
\subsubsection{Linear state approximation and \GMG-\POD-\ROM} 
\label{Sec:GMGPODROM}
We choose $\eta$ to be the identity map on $\R^{r-m}$, i.e., $\eta: \R^{r-m} \rightarrow \R^{r-m}$ with $\eta(\stateRed_2)= \stateRed_2$. 
Then, the representation in \eqref{equ:general_embedding_map} reduces to the linear approximation map
								\begin{align}
									\label{equ:linear_approximation}
									\approxfunc_{\text{lin}}(\stateRed)=\bmV\stateRed =\bmB\stateRed_1+\VPODGMG\stateRed_2,
								\end{align}
								with $\bmV= 
								\begin{bmatrix}
									\bmB,&\VPODGMG
								\end{bmatrix}
\in \R^{\FOMsize \times \ROMsize}$. We assume that $\bmV$ has full column rank. This is a mild assumption since full column rank of $\VPODGMG$ can be ensured by construction. Moreover, if $\bmB$ is not of full column rank, the \pH system can be rewritten in a form with less ports. Further, as $\bmB$ is given by the problem formulation, we only need to find a suitable $\VPODGMG$ such that the conditions of Theorem \ref{Theo:pH_structure} are fulfilled. This is achieved by solving the following optimization problem where ${(\cdot)}^{\dagger}$ represents the Moore-Penrose pseudoinverse 
\begin{align*}
		\VPODGMG
		&=
		\argmin_{\bmA\in\Xi_{\VPODGMG}}
		\sum_{i=0}^{\snapsize}
		\left\| \statex^i-\begin{bmatrix}
			\bmB,\bmA
		\end{bmatrix}
		\begin{bmatrix}
			\bmB,\bmA
		\end{bmatrix}^{\dagger}\statex^i \right\|_2^2\\
		&=
		\argmin_{\bmA\in\Xi_{\VPODGMG}}
		\left\| 
		\left(\mathbb{I}_N-\bmA\bmA^{\top}\right)
		\left(\bmX-\bmB\bmB^{\dagger}\bmX\right)
		\right\|_F^2 ,
\end{align*}
where 
	$\Xi_{\VPODGMG} = \left\{\bmA\left|\right.\bmA\in\R^{\FOMsize\times(\ROMsize-\portsize)},
	\
	 \bmB^{\top} \bmA= \bm{0}_{\portsize \times (\ROMsize-\portsize)},
	 \
	 \bmA^{\top}\bmA=\mathbb{I}_{\ROMsize-\portsize}\right\}.$
This means that $\VPODGMG$ can be determined by applying \POD to the adjusted snapshot matrix $\bmX-\bmB\bmB^{\dagger}\bmX$.
For the point reduction map used to project the initial condition, we choose 
\begin{align}
	\rho_{\text{lin}}(\statex) := \bmV^{\dagger} \statex= 
	\begin{bmatrix}
		\bmB^{\dagger},&\Vbar^{\top}
	\end{bmatrix}^{\top} \statex,
\end{align}
which, by construction, satisfies the point projection property, i.e., $\rho_{\text{lin}} \circ \approxfunc_{\text{lin}} = \text{id}_{\R^{r}}$. The \ROM obtained from $\approxfunc_{\text{lin}}$, $\rho_{\text{lin}}$ and the corresponding \GMG reduction is called \GMG-\POD-\ROM. 

\begin{remark}
 The energy-stable \MOR method for \pH systems\footnote{Not to be confused with energy-stable \pH systems, which have been recently introduced in \cite{BUCHFINK2026109784}.} introduced in \cite{Rettberg31122023} appears similar to the \GMG-\POD-\ROM since both methods involve $(\bmJ-\bmR)^{-1}$ in the projection matrix. However, the method in \cite{Rettberg31122023} preserves only the state equation, whereas the corresponding output equation is not preserved. Thus, the resulting \ROM does not in general inherit the full \pH structure.  
\end{remark}


\subsubsection{Quadratic state approximation and \GMG-\QM-\ROM}
\label{Sec:GMGQMROM}
We now consider approximation maps on quadratically embedded manifolds of the form
	\begin{align}
		\label{equ:quadratic_approximation_with_M}
		\approxfunc_{\text{quad}}(\stateRed)
		=\bmB\stateRed_1+\VPODQMone\stateRed_2 + \VPODQMtwo\bmM(\stateRed_2\otimes \stateRed_2),
		\quad
		\stateRed_1\in\R^{\portsize},
		\quad
		\stateRed_2\in\R^{\ROMsize-\portsize},
	\end{align}
where $\nlsize\leq (\ROMsize-\portsize)^2$ denotes the lifting dimension, $\otimes$ the Kronecker product, $	\VPODQMone\in\R^{\FOMsize\times(\ROMsize-\portsize)}$,
$\VPODQMtwo\in\R^{\FOMsize\times \nlsize}$, and $\bmM\in\R^{\nlsize\times(\ROMsize-\portsize)^2}$.
This corresponds to $\eta : \R^{\ROMsize-\portsize} \rightarrow \R^{\nlsize+\ROMsize-\portsize}$ with $\eta( \stateRed_2)= [ \stateRed_2, \bmM(\stateRed_2\otimes \stateRed_2)]$ and $\bmV= [\VPODQMone, \VPODQMtwo]$ in \eqref{equ:general_embedding_map}.
We next describe the construction of $\VPODQMone, \VPODQMtwo$, and $\bmM$. 
First, we construct $\VPODQMone$ and $\VPODQMtwo$ by a hierarchical POD procedure based on the adjusted snapshot matrix $\bmX-\bmB\bmB^{\dagger}\bmX$, where $\bmX\in\R^{\FOMsize\times(\snapsize +1)}$ denotes the snapshot matrix introduced in \eqref{equ:snap_mat_difi}. The basis $\VPODQMone$ is obtained by applying \POD of $\bmX-\bmB\bmB^{\dagger}\bmX$, while $\VPODQMtwo$ is computed from the remaining residual, i.e., 
	\begin{align}
		\label{equ:construc_V_comb_A}
		\VPODQMone &= \argmin_{\bmA\in\R^{\FOMsize\times (\ROMsize-\portsize)},
			\bmA^{\top}\bmA=\mathbb{I}_{\ROMsize-\portsize}}
		\left\|\left(\mathbb{I}_{\FOMsize}-\bmA\bmA^{\top} \right) \left( \bmX-\bmB\bmB^{\dagger}\bmX \right) \right\|_F^2, \qquad \text{ and }& 
		\\
		\label{equ:construc_V_comb_B}
		\VPODQMtwo
		&= \argmin_{\bmA\in\R^{\FOMsize\times \nlsize},
			\bmA^{\top}\bmA=\mathbb{I}_{\nlsize}}
		\left\| \left(\mathbb{I}_{\FOMsize}-\bmA\bmA^{\top}\right)\left(\bmX-
		\begin{bmatrix}
			\bmB, \VPODQMone
		\end{bmatrix}
		\begin{bmatrix}
			\bmB, \VPODQMone
		\end{bmatrix}^{\dagger}
		\bmX\right)\right\|_F^2.&
	\end{align}
	We now describe how $\bmM$ can be computed. To this end, we introduce the point reduction $\rho_{\text{quad}}(\statex)=
	\begin{bmatrix}
		\bmB,\VPODQMone
	\end{bmatrix}^{\dagger} \statex.
	$
	Since
	$\VPODQMtwo\perp
	\begin{bmatrix}
		\bmB,\VPODQMone
	\end{bmatrix}$, the point projection property is satisfied, i.e., $\rho_{\text{quad}} \circ \approxfunc_{\text{quad}}=\text{id}_{\R^{r}}.$ We therefore compute the reduced coordinates of the snapshots by $\begin{bmatrix}\stateRed_1^i, \stateRed_2^i\end{bmatrix}^\top= 		\begin{bmatrix}
		\bmB,\VPODQMone
	\end{bmatrix}^{\dagger} \statex^i$, $i=1, \ldots, \snapsize$. 
	Next, we define the residual associated with the linear approximation by 
	\begin{align*}
		\textnormal{res}^{i}_{\rm{QM}} := \statex^i - \bmB\bmB^{\dagger}\statex^i-\VPODQMone\VPODQMone^{\top}\statex^i, \quad i=1,\dots,\snapsize.
	\end{align*}
	The matrix $\bmM$ is then determined by fitting the quadratic correction $\VPODQMtwo\bmM(\stateRed_2^i\otimes\stateRed_2^i)$ to the residual $\textnormal{res}^{i}_{\mathrm{QM}}$. This yields the regularized least squares problem
	\begin{align}
		\label{equ:least_square_reg}
		\bmM = \argmin_{\bmA\in\R^{\nlsize\times (\ROMsize-\portsize)^2}}\left(
		\sum_{i=1}^{\snapsize}
		\left\|
		\textnormal{res}_{\rm{QM}}^{i}
		-
		\VPODQMtwo\bmA (\statex_2^i \otimes\statex_2^i)
		\right\|_2^2
		+\lambda_{\rm reg} \|\bmA\|_F^2
		\right),
	\end{align}
	where $\lambda_{\rm reg} \in\R^{+}$ denotes a regularization parameter. 
	 
	The \ROM obtained by $\approxfunc_{\text{quad}}$, $\rho_{\text{quad}}$ and the corresponding \GMG reduction map is again a \pH system. 
	We refer to the resulting model as \GMG-\QM-\ROM. 

\section{Numerical results}
\label{Sec:Numerical_results}

In this section, we evaluate the \MOR methods introduced in Sections \ref{Sec:GMGPODROM} and \ref{Sec:GMGQMROM} and compare the performance with established structure-preserving \MOR methods for \pH systems. 
Section \ref{subsec:Num_imp} provides implementation details and introduces the error measures used throughout the numerical experiments. 
In Section \ref{subsec:Num_linear_MSD}, we consider a linear mass-spring-damper system, followed by a nonlinear mass-spring-damper system in Section \ref{subsec:Num_nonlinear_MSD}.

\subsection{Implementation details and error measures}
\label{subsec:Num_imp}

In this paper, all numerical simulations are conducted using Python 3.11. 
For temporal integration, we use the sixth-order Gauss-Legendre method, as it preserves the \pH structure \cite{kotyczka2018discrete}.  
For nonlinear \pH systems, the resulting implicit time-stepping scheme is solved using Newton's method at each time step.
We compare the proposed \MOR methods to existing structure-preserving \MOR methods for \pH systems from the literature. For the linear mass-spring-damper system, we consider the method introduced in \cite{schulze2023structure} and refer to the resulting \ROM as \SPLin-\POD-\ROM. 
For the nonlinear mass-spring-damper system, which involves a nonlinear Hamiltonian, we compare against the structure-preserving \MOR method presented in \cite{chaturantabut2016structure}, and refer to the resulting \ROM \SPQuad-\POD-\ROM. 
We briefly recall both methods below. 

The \SPLin-\POD-\ROM uses the linear approximation map $\approxfunc(\stateRed)=\VPOD\stateRed$, where $\VPOD\in\R^{\FOMsize\times\ROMsize}$ is constructed from the first $r$ \POD modes of the snapshot matrix $\bmX$ introduced in \eqref{equ:snap_mat_difi}, i.e.,
\begin{align}
	\label{equ:const_Lin_POD}
	\VPOD= \argmin_{\bmA^{\top}\bmA=\mathbb{I}_{\ROMsize}}
	\left\|
	\bmX - \bmA\bmA^{\top}\bmX
	\right\|^2_{F}.
\end{align}
The reduction map $\mathcal{R}_{\text{red}}(\statex) = \bmW^\top\statex$ is constructed with $\bmW\in\R^{\FOMsize\times\ROMsize}$, obtained from a \GMG reduction with matrix $\bmQ$, 
\begin{align}
	\label{equ:const_Lin_POD_red}
	\bmW = \matfunc_{Q}\left(\VPOD\right)^{\top}
	=
	\bmQ\bmV\left(\bmV^{\top}\bmQ\bmV\right)^{-1},
\end{align}
 where $\bmQ=\nabla_{\statex}^{2}\curlyH$ denotes the Hessian matrix of a quadratic Hamiltonian.
 
 The same linear approximation map is used in the construction of the \SPQuad-\POD-\ROM. However, a different reduction map is applied.
 First, a matrix $\bmV_{\nabla_{\statex}\curlyH}\in\R^{\FOMsize\times\ROMsize}$ is constructed from the first $r$ \POD modes of  
\begin{align*}
 	\bmX_{\nabla_{\statex}\curlyH}\coloneqq
 	\begin{bmatrix}
 		\nabla_{\statex}\curlyH(\statex^0),
 		\allowbreak
 		\ldots,
 		\allowbreak
 		\nabla_{\statex}\curlyH(\statex^{\snapsize})
 	\end{bmatrix},
 \end{align*}
i.e., the snapshot matrix of Hamiltonian gradients. 
 The linear reduction map $\mathcal{R}_{\text{red}}(\statex)=\bmW^{\top}\statex$ is then constructed with $\bmW\in\R^{\FOMsize\times\ROMsize}$ given by
 \begin{align}
 	\label{equ:const_SP_POD_redmap}
 	\bmW=\left(\bmV_{\nabla_{\statex}\curlyH}^{\top}\VPOD\right)^{-1}\bmV_{\nabla_{\statex}\curlyH}.
 \end{align} 

To summarize the main differences between the considered methods, Table \ref{table:proj_bases} lists the corresponding approximation and reduction maps together with the resulting \ROMs. 

\begin{table}[htbp]
\setlength{\tabcolsep}{4pt}
	\begin{center}
		\begin{tabular}{|c|c|c|}
			\hline
			\ROM  & Approximation map  $\approxfunc$ & 
			Reduction map $\mathcal{R}_{\text{red},\statex}$
			\\ 
			\hline
			\SPLin-\POD-\ROM \cite{schulze2023structure} 
			& $\VPOD\stateRed$ 
			\eqref{equ:const_Lin_POD}
			& $\matfunc_{Q}(\VPOD)^{\top}\statex$
			\eqref{equ:const_Lin_POD_red}
			\tstrut 
			\\
			\SPQuad-\POD-\ROM \cite{chaturantabut2016structure} 
			& $ \VPOD\stateRed$ 
			\eqref{equ:const_Lin_POD}
			& $(\bmV_{\nabla_{\statex}\curlyH}^{\top}\VPOD)^{-1}\bmV_{\nabla_{\statex}\curlyH}\statex$
			\eqref{equ:const_SP_POD_redmap}
			\\
			\hline
			\GMG-\POD-\ROM& 
			$
			\begin{bmatrix}
				\bmB,\VPODGMG
			\end{bmatrix}\stateRed$
			\eqref{equ:linear_approximation}
			& $\matfunc_{(\bmJ-\bmR)^{-1}}(\bmD_{\stateRed}\approxfunc(\stateRed))^{\top}\statex$
			\eqref{equ:GMG_NL_Construct_W}
			\tstrut \\
			\GMG-\QM-\ROM & 
			$ 
			\begin{bmatrix}
				\bmB,\VPODQMone
			\end{bmatrix}\stateRed + \VPODQMtwo\bmM(\stateRed_2\otimes\stateRed_2)$ 
			\eqref{equ:quadratic_approximation_with_M}& $\matfunc_{(\bmJ-\bmR)^{-1}}(\bmD_{\stateRed}\approxfunc(\stateRed))^{\top}\statex$
			\eqref{equ:GMG_NL_Construct_W}
			\\
			\hline
		\end{tabular}
		\caption{Approximation and reduction maps of different MOR methods and resulting ROMs.}
		\label{table:proj_bases}
	\end{center}
\end{table}

To compare the performance of the considered \MOR methods, we introduce error measures for the state, the output and the preservation of the energy balance equation. 
Let $\statex^0,\ldots,\statex^{\snapsize}$ and $
	\stateRed(t_0),\ldots,\stateRed(t_{\snapsize})$ denote the trajectories of the \FOM and the corresponding \ROM.
We denote the relative state reduction error $e_{\statex, {\rm red}}$ and the relative state projection error $e_{ \statex, {\rm proj}}$ by 
\begin{align}\label{eq:red_error}
	e_{\statex, {\rm red}} &\coloneq 
	\sqrt{
		\frac{\sum_{i=0}^{\snapsize}
			\|\statex^i-\approxfunc(\stateRed(t_i))\|_2^2
		}
		{
			\sum_{i=0}^{\snapsize}
			\|\statex^i\|_2^2
		}
	},
	\\ \label{eq:proj_error}
	e_{\statex, {\rm proj}}  &\coloneq 
	\sqrt{
		\frac{\sum_{i=0}^{\snapsize}
			\|\statex^i-\approxfunc(\rho(\statex^i))\|_2^2
		}
		{
			\sum_{i=0}^{\snapsize}
			\|\statex^i\|_2^2
		}
	}.
\end{align}
For nonlinear approximation maps $\approxfunc$ of the form \eqref{equ:quadratic_approximation_with_M}, we additionally consider the lower bound for the reduction error proposed in \cite{buchfink2024approximation}, given by 
\begin{align}
	\label{equ:non_app_lower_bound}
	e_{\statex,{\rm lowerbound}} \coloneq 
	\sqrt{
		\left(\sum_{i=0}^{\snapsize}
		\left\|
		(\mathbb{I} - \bmB\bmB^{\dagger}-\VPODQMone\VPODQMone^{\top} - \VPODQMtwo \VPODQMtwo^{\top})
		\statex^i
		\right\|_2^2
		\right)
		\Big/
		\left(
		\sum_{i=0}^{\snapsize}
		\|\statex^i\|_2^2
		\right)
	}
	.
\end{align}
The average relative output error is defined by
\begin{align}\label{eq:output_error}
	e_{\outputy} \coloneq 
	\sqrt{
		\frac{\sum_{i=0}^{\snapsize}
			\|\outputy^i-\outputRed(t_i)\|_2^2
		}
		{
			\sum_{i=0}^{\snapsize}
			\|\outputy^i\|_2^2
		}
	},
\end{align}
where $\outputy$ and $\outputRed$ denote the outputs of the \FOM and the \ROM, respectively. 
Furthermore, we evaluate how well the energy balance equation \eqref{equ:energy_conserve_time_int} is preserved by the considered \MOR methods. To this end, we define this energy balance error by  
\begin{align}
	\label{equ:def_energy_conserve_error}
	e_{\rm energy}({t_{\rm{end}}})&\coloneq \left| \curlyH\left(\statex({t_{\rm{end}}})\right)-\curlyH\left(\statex(0)\right)- {\int_{t=0}^{t_{\rm{end}}} {\outputyT^{\top}\inputu(t) \ {\rm d}t}} \right. \\
		 &+\left.	\int_{t=0}^{t_{\rm{end}}}\left(\funcR\nabla_{\statex}\curlyH(\statexT)\right)^{\top}\nabla_{\statex}\curlyH(\statexT) \ {\rm d}t . \notag \right| 
\end{align}

\subsection{Linear mass-spring-damper system}
\label{subsec:Num_linear_MSD}

We consider the linear mass-spring-damper system from \cite{gugercin2012structure}, depicted in Figure \ref{fig:msd_sys}. 
For $i=1,\ldots,\frac{\FOMsize}{2}-1$, the masses $m_i$ and $m_{i+1}$ are connected by a spring with stiffness coefficient $k_i$.  
The last mass $m_{\frac{\FOMsize}{2}}$ is connected to a wall by a spring with stiffness coefficient $k_{\frac{\FOMsize}{2}}$. 
 Dampers are attached to all masses, and the damping coefficient associated with mass $m_i$ is $c_i$ for $i=1,\ldots,\frac{\FOMsize}{2}$.
\begin{figure}[H]
	\centering
	\begin{circuitikz}[scale = 0.7][H]
		\pattern[pattern=north east lines] (13,0.5) rectangle (13.25,3);
		\draw[thick] (13,0.5) -- (13,3);
		\draw (11,2.25) to[spring, l=$k_{\frac{\FOMsize}{2}}$] (13,2.25);
		\draw (11,1.25) to[damper, l_=$c_{\frac{\FOMsize}{2}}$] (12,1.25);
		\pattern[pattern=north east lines] (12,1) rectangle (12.125,1.5);
		\draw[fill=gray!40] (9,1) rectangle (11,2.5);
		\node at (10,1.75) {$m_{\frac{\FOMsize}{2}}$};
		
		\node at (8.4, 1.75) {\Large$\cdots$};
		
		\draw (6,2.25) to[spring, l=$k_{2}$] (8,2.25);
		\draw (6,1.25) to[damper, l_=$c_{2}$] (7,1.25);
		\pattern[pattern=north east lines] (7,1) rectangle (7.125,1.5);
		\draw[fill=gray!40] (4,1) rectangle (6,2.5);
		\node at (5,1.75) {$m_{2}$};
		
		\draw (2,2.25) to[spring, l=$k_{1}$] (4,2.25);
		\draw (2,1.25) to[damper, l_=$c_{1}$] (3,1.25);
		\pattern[pattern=north east lines] (3,1) rectangle (3.125,1.5);
		\draw[fill=gray!40] (0,1) rectangle (2,2.5);
		\node at (1,1.75) {$m_{1}$};
		
		\draw[->] (-1,1.75) -- (0,1.75);
		\node at (-0.7,2.25) {$u(t)$};
	\end{circuitikz}
	\caption{Linear mass-spring-damper system}
	\label{fig:msd_sys}
\end{figure}
A realization of this system for order $\FOMsize=6$ is 
\begin{align*}
	\bmB = 
	\begin{bmatrix}
		0\\
		1\\
		0\\
		0\\
		0\\
		0
	\end{bmatrix},
	\quad
	\bmJ =
	\begin{bmatrix}
		\jtwo & \bzero &\bzero \\
		\bzero &\jtwo  &\bzero \\
		\bzero & \bzero &\jtwo  
	\end{bmatrix},
	\quad
	\bmR =
	{\rm diag}(0,c_1,0,c_2,0,c_3),
\end{align*}
and Hamiltonian
\begin{align*}
		\curlyH\colon(\statex)=
	\frac{1}{2}\statex^{\top}\bmQ\statex,
\end{align*}
with
\begin{align*}
	\bmQ =
	\begin{bmatrix}
		k_1&0 & -k_1&0 &0 &0\\
		0&\frac{1}{m_1} & 0&0 &0 &0\\
		-k_1&0 & k_1+k_2&0 &-k_2 &0\\
		0&0 & 0&\frac{1}{m_2} &0 &0\\
		0&0 & -k_2&0 &k_2+k_3 &0\\
		0&0 & 0&0 &0 &\frac{1}{m_3}
	\end{bmatrix}.
\end{align*}
Here, 
\begin{align*}\jtwo=\begin{bmatrix}
	0&1\\
	-1&0
\end{bmatrix},
\end{align*} 
denotes the canonical Poisson matrix in two dimensions. 
We consider a 100-dimensional mass-spring-damper system where $m_i=2, k_i=1,$ and $c_i=1$ for $ i=1,2,\ldots,50$.  The time interval $(0,100\mathrm{s}]$ is discretized using a uniform time step $\Delta t = 0.1$s, resulting in $n_t = 1001$ time instances. We consider two different inputs $\inputu(t)= 0.1$ and $\inputu(t)=0.1\sin(t)$. 
For the \GMG-\QM-\ROM, we choose $\nlsize = 8$, and $\lambda_{\rm reg}=10^{-3}$. 
The tolerance and maximum number of iterations of the Newton's method are set to $10^{-10}$ and $10$, respectively.

In Figures \ref{fig:MSD_time_state_sin} and \ref{fig:MSD_time_state_cons}, the relative state reduction and projection errors are shown. 
The results demonstrate that the \GMG-based \MOR methods introduced in this paper achieve higher accuracy than the \SPLin-\POD-\ROM.
Moreover, the \GMG-\QM-\ROM consistently outperforms the \GMG-\POD-\ROM, highlighting the benefit of the quadratic approximation map.
For the \GMG-\POD-\ROM, the reduction error is only slightly larger than the corresponding projection error.
In addition, the reduction error of the \GMG-\QM-\ROM is close to the lower bound \eqref{equ:non_app_lower_bound}, indicating that the quadratic approximation captures the dominant features of the solution manifold.
The relative output errors are shown in Figure \ref{fig:MSD_time_output_sin} and \ref{fig:MSD_time_output_cons}. The results show a similar trend to the state error. The \GMG-based methods provide higher accuracy than the \SPLin-\POD-\ROM, while the \GMG-\QM-\ROM again yields the most accurate results among the considered methods.

\begin{figure}[htbp]
	\centering 
	\subfloat[$u(t) = 0.1\sin (t)$]{
		\begin{tikzpicture}
			\label{fig:MSD_time_state_sin}
			\begin{semilogyaxis}
				[
				width=.35\linewidth,
				height=.20\linewidth, 
				scale only axis,
				grid=both,
				grid style={line width=.1pt, draw=gray!10},
				major grid style={line width=.2pt, draw=gray!50},
				axis lines*=left,
				axis line style={line width=\AxisLineWidth},
				xmin=3.5, xmax=20.5,
				ymin=2e-7, ymax=5e-1,
				ytick={1e-7, 1e-6, 1e-5, 1e-4, 1e-3, 1e-2, 1e-1, 1e0},
				yticklabels = { , ,  $10^{-5}$ , , $10^{-3}$, , $10^{-1}$, },
				ylabel= $e_{\statex}$,
				xlabel= Red. order $r$,
				xlabel style={at={(axis description cs:0.5,-0.1)},anchor=north},
				ylabel style={at={(axis description cs:-0.2,0.5)},anchor=south},
				legend columns=3,
				legend entries={\SPLin-\POD-\ROM, \GMG-\POD-\ROM, \GMG-\QM-\ROM,
					$e_{\statex, {\rm proj}}$, $e_{\statex,{\rm lowerbound}}$},
				legend to name= Legend_Fig_MSD_time,
				unbounded coords=jump, 
				]
				\addplot 
				[color=\ColorSPG, mark=\MarkerSPG, \LineStyleSPG, mark size=\MarkerSize, line width=\LineWidth,mark options={solid}]
				table[x index=0, y index=1, col sep=comma]  {./Code/MSD/Data/sin/Plot/POD_SPG_state_error.dat};
				\addplot 
				[color=\ColorGMG, mark=\MarkerGMG,\LineStyleGMG, mark size=\MarkerSize, line width=\LineWidth,mark options={solid}]
				table[x index=0, y index=1, col sep=comma]  {./Code/MSD/Data/sin/Plot/POD_GMG_state_error.dat};
				\addplot 
				[color=\ColorGMGQM, mark=\MarkerGMGQM,\LineStyleGMG, mark size=\MarkerSize, line width=\LineWidth,mark options={solid}]
				table[x index=0, y index=3, col sep=comma]  {./Code/MSD/Data/sin/Plot/NL_state_error.dat};
				\addplot 
				[color = \ColorGMG, mark=\MarkerProj, \LineStyleProj, mark size=\MarkerSize, line width=\LineWidth,mark options={solid}]
				table[x index=0, y index=1, col sep=comma]  {./Code/MSD/Data/sin/Plot/MSD_linear_proj_error_sin.dat};
				\addplot 
				[color=\ColorLowbound, mark=\MarkerProj, \LineStyleProj, mark size=\MarkerSize, line width=\LineWidth,mark options={solid}]
				table[x index=0, y index=3, col sep=comma]  {./Code/MSD/Data/sin/Plot/MSD_nonlinear_proj_error_lower_bound_sin.dat};
			\end{semilogyaxis}
		\end{tikzpicture}
	}
	\subfloat[$u(t) = 0.1$]{
		\begin{tikzpicture}
			\label{fig:MSD_time_state_cons}
			\begin{semilogyaxis}
				[
				width=.35\linewidth,
				height=.20\linewidth, 
				scale only axis,
				grid=both,
				grid style={line width=.1pt, draw=gray!10},
				major grid style={line width=.2pt, draw=gray!50},
				axis lines*=left,
				axis line style={line width=\AxisLineWidth},
				xmin=3.5, xmax=20.5,
				ymin=2e-7, ymax=5e-1,
				ytick={1e-7, 1e-6, 1e-5, 1e-4, 1e-3, 1e-2, 1e-1, 1e0},
				yticklabels = { , ,  $10^{-5}$ , , $10^{-3}$, , $10^{-1}$, },
				yticklabels=\empty, 
				xlabel= Red. order $r$,
				xlabel style={at={(axis description cs:0.5,-0.1)},anchor=north},
				]
				\addplot 
				[color=\ColorSPG, mark=\MarkerSPG, \LineStyleSPG, mark size=\MarkerSize, line width=\LineWidth,mark options={solid}]
				table[x index=0, y index=1, col sep=comma]  {./Code/MSD/Data/constant/Plot/POD_SPG_state_error.dat};
				\addplot 
				[color=\ColorGMG, mark=\MarkerGMG,\LineStyleGMG, mark size=\MarkerSize, line width=\LineWidth,mark options={solid}]
				table[x index=0, y index=1, col sep=comma]  {./Code/MSD/Data/constant/Plot/POD_GMG_state_error.dat};
				\addplot 
				[color=\ColorGMGQM, mark=\MarkerGMGQM,\LineStyleGMG, mark size=\MarkerSize, line width=\LineWidth,mark options={solid}]
				table[x index=0, y index=3, col sep=comma]  {./Code/MSD/Data/constant/Plot/NL_state_error.dat};
				\addplot 
				[color = \ColorGMG, mark=\MarkerProj, \LineStyleProj, mark size=\MarkerSize, line width=\LineWidth,mark options={solid}]
				table[x index=0, y index=1, col sep=comma]  {./Code/MSD/Data/constant/Plot/MSD_linear_proj_error_constant.dat};
				\addplot 
				[color=\ColorLowbound, mark=\MarkerProj, \LineStyleProj, mark size=\MarkerSize, line width=\LineWidth,mark options={solid}]
				table[x index=0, y index=3, col sep=comma]  {./Code/MSD/Data/constant/Plot/MSD_nonlinear_proj_error_lower_bound_constant.dat};
			\end{semilogyaxis}
		\end{tikzpicture}
	}
	
	\subfloat[$u(t) = 0.1\sin (t)$]{
		\begin{tikzpicture}
			\label{fig:MSD_time_output_sin}
			\begin{semilogyaxis}
				[
				width=.35\linewidth,
				height=.20\linewidth, 
				scale only axis,
				grid=both,
				grid style={line width=.1pt, draw=gray!10},
				major grid style={line width=.2pt, draw=gray!50},
				axis lines*=left,
				axis line style={line width=\AxisLineWidth},
				xmin=3.5, xmax=20.5,
				ymin=2e-6, ymax=5e-1,
				ytick={1e-6, 1e-5, 1e-4, 1e-3, 1e-2, 1e-1, 1e0},
				yticklabels = { ,  $10^{-5}$ , , $10^{-3}$, , $10^{-1}$, },
				ylabel= $e_{\outputy}$,
				xlabel= Red. order $r$,
				xlabel style={at={(axis description cs:0.5,-0.1)},anchor=north},
				ylabel style={at={(axis description cs:-0.2,0.5)},anchor=south},
				]
				\addplot 
				[color=\ColorSPG, mark=\MarkerSPG, \LineStyleSPG, mark size=\MarkerSize, line width=\LineWidth,mark options={solid}]
				table[x index=0, y index=1, col sep=comma]  {./Code/MSD/Data/sin/Plot/POD_SPG_output_error.dat};
				\addplot 
				[color=\ColorGMG, mark=\MarkerGMG,\LineStyleGMG, mark size=\MarkerSize, line width=\LineWidth,mark options={solid}]
				table[x index=0, y index=1, col sep=comma]  {./Code/MSD/Data/sin/Plot/POD_GMG_output_error.dat};
				\addplot 
				[color=\ColorGMGQM, mark=\MarkerGMGQM,\LineStyleGMG, mark size=\MarkerSize, line width=\LineWidth,mark options={solid}]
				table[x index=0, y index=3, col sep=comma]  {./Code/MSD/Data/sin/Plot/NL_output_error.dat};
			\end{semilogyaxis}
		\end{tikzpicture}
	}
	\subfloat[$u(t) = 0.1$]{
		\begin{tikzpicture}
			\label{fig:MSD_time_output_cons}
			\begin{semilogyaxis}
				[
				width=.35\linewidth,
				height=.20\linewidth, 
				scale only axis,
				grid=both,
				grid style={line width=.1pt, draw=gray!10},
				major grid style={line width=.2pt, draw=gray!50},
				axis lines*=left,
				axis line style={line width=\AxisLineWidth},
				xmin=3.5, xmax=20.5,
				ymin=2e-6, ymax=5e-1,
				ytick={1e-6, 1e-5, 1e-4, 1e-3, 1e-2, 1e-1, 1e0},
				yticklabels=\empty, 
				xlabel= Red. order $r$,
				xlabel style={at={(axis description cs:0.5,-0.1)},anchor=north},
				]
				\addplot 
				[color=\ColorSPG, mark=\MarkerSPG, \LineStyleSPG, mark size=\MarkerSize, line width=\LineWidth,mark options={solid}]
				table[x index=0, y index=1, col sep=comma]  {./Code/MSD/Data/constant/Plot/POD_SPG_output_error.dat};
				\addplot 
				[color=\ColorGMG, mark=\MarkerGMG,\LineStyleGMG, mark size=\MarkerSize, line width=\LineWidth,mark options={solid}]
				table[x index=0, y index=1, col sep=comma]  {./Code/MSD/Data/constant/Plot/POD_GMG_output_error.dat};
				\addplot 
				[color=\ColorGMGQM, mark=\MarkerGMGQM,\LineStyleGMG, mark size=\MarkerSize, line width=\LineWidth,mark options={solid}]
				table[x index=0, y index=3, col sep=comma]  {./Code/MSD/Data/constant/Plot/NL_output_error.dat};
			\end{semilogyaxis}
		\end{tikzpicture}
	}	
	
	\pgfplotslegendfromname{Legend_Fig_MSD_time}
	\caption{Reduction error $e_{\statex, {\rm red}}$ \eqref{eq:red_error} and output error $e_{\outputy}$ \eqref{eq:output_error} obtained by the \SPLin-\POD-\ROM,  \GMG-\POD-\ROM, and \GMG-\QM-\ROM for the linear mass-spring-damper system with inputs $\inputu(t) = 0.1$ and $\inputu(t)=0.1\sin(t)$. The dotted green lines with square markers denotes the projection error $e_{\statex, {\rm proj}}$ \eqref{eq:proj_error} of corresponding linear approximation maps. The lower bound for the quadratic \ROMs $e_{\statex,{\rm lowerbound}}$ \eqref{equ:non_app_lower_bound} is depicted by a dotted cyan line with square markers.
	}
	\label{fig:MSD_time}
\end{figure}	

\subsection{Nonlinear mass-spring-damper system}
\label{subsec:Num_nonlinear_MSD}

As a second numerical example, we consider a nonlinear mass-spring-damper system consisting of $\FOMsizeHalf$ masses. This problem is based on \cite[Section 5.2.2]{annoni2016modeling}.

For $i=1,\ldots,\FOMsizeHalf-1$, masses $m_i$ and $m_{i+1}$ are connected by a nonlinear spring and a linear damper. 
The corresponding spring and damping coefficients are given by
	$
	k_1+k_2(\xi_{i+1}-\xi_{i})^2$ and $\gamma_i,
	$
	where $\xi_i$ denotes the position of the mass $m_{i}$.
	The last mass $m_n$ is connected to the wall by a nonlinear spring and a linear damper where the spring and damping coefficients are $k_1+k_2\xi_{\FOMsizeHalf}^2$ and $\gamma_{\FOMsizeHalf}$, respectively. Defining the state vector by $\statex=[\statex_1,\ldots,\statex_{\FOMsize}]^{\top}=[\xi,\ldots,\xi_\FOMsizeHalf,\dot{\xi}_1,\ldots,\dot{\xi}_\FOMsizeHalf]^{\top}$, the system admits the \pH representation $\SysLable(\bmJ,\bmR,\curlyH,\bmB,\inputu,\outputy,\statex_{0})$ with $\bmB, \bmJ, \bmR$ and $\curlyH$ given by
	\begin{align*}
		\bmB&=
		\begin{bmatrix}
			\bzero_\FOMsizeHalf\\
			1\\
			\bzero_{\FOMsizeHalf-1}
		\end{bmatrix},\quad
		\bmJ= 
		\begin{bmatrix}
			\bzero_{\FOMsizeHalf,\FOMsizeHalf} & \mathbb{I}_{\FOMsizeHalf}\\
			-\mathbb{I}_{\FOMsizeHalf} & \bzero_{\FOMsizeHalf,\FOMsizeHalf}
		\end{bmatrix},\quad
		\bmR = 
		\begin{bmatrix}
			\bzero_{\FOMsizeHalf,\FOMsizeHalf} & \bzero_{\FOMsizeHalf,\FOMsizeHalf}\\
			\bzero_{\FOMsizeHalf,\FOMsizeHalf} & {\rm diag}(\gamma_1,\ldots, \gamma_\FOMsizeHalf)
		\end{bmatrix},
		\\
		\curlyH(\statex)&= 
		\sum_{i=1}^{\FOMsizeHalf}\frac{1}{2}m\statex_{\FOMsizeHalf+i}^2
		+\sum_{i=1}^{\FOMsizeHalf-1}
		(\frac{1}{2}k_1 (\statex_{i+1}-\statex_{i})^{2} +\frac{1}{4}k_2 (\statex_{i+1}-\statex_{i})^4)
		+
		\frac{1}{2}k_1 \statex_{\FOMsizeHalf}^{2} +\frac{1}{4}k_2 \statex_{\FOMsizeHalf}^4.
	\end{align*}
	We decompose the Hamiltonian into a quadratic and a nonlinear part, 
	\begin{align*}\curlyH(\statex)=\frac{1}{2}\statex^{\top}\bmQ\statex+
	\DEIMsplitHam
	(\statex),
	\end{align*}
	where 
	\begin{align*}
		\bmQ=
		\begin{bmatrix}
			\bmQ_0&\bzero_{\FOMsizeHalf,\FOMsizeHalf}\\
			\bzero_{\FOMsizeHalf,\FOMsizeHalf}& \mathbb{I}_{\FOMsizeHalf}
		\end{bmatrix},\quad \text{with} \quad 
		\bmQ_0=
		\begin{bmatrix}
			1 & -1& 0   & \ldots & 0 & 0\\
			-1 & 2& -1 &            &    & 0\\
			\vdots & &\ddots  & \ddots      &  & \\
			0 & 0& \ldots & -1 & 2 & -1\\
			0 & 0& \ldots & 0 & -1 & 2
		\end{bmatrix}.
	\end{align*}
	The nonlinear part is given by $\DEIMsplitHam(x) = \sum_{k=1}^{\FOMsizeHalf-1}\phi(\statex_{k}-\statex_{k+1}) +\phi(\statex_{\FOMsizeHalf})$ with
	$
	\phi(x) = \frac{1}{4}x^4.
	$
	To simplify the representation of the nonlinear part, we introduce the transformed state $\widehat{\statex}
		=
		\begin{bmatrix}
			\ell_1,\ldots,\ell_{\FOMsizeHalf},
			\statex_{\FOMsizeHalf+1},
			\ldots,
			\statex_{\FOMsize}
		\end{bmatrix}^{\top}$, where $\ell_i = \statex_{i} - \statex_{i+1},  i=1,\ldots,\FOMsizeHalf-1$, and $\ell_{\FOMsizeHalf} =\statex_{\FOMsizeHalf}$.
	In these coordinates, the nonlinear part reads $\widetilde{p}(\widehat{\statex}) =\sum_{k=1}^{\FOMsizeHalf}\phi(\ell_k)$. 
	Furthermore, let $\widetilde{\bm{T}} \in \R^{\FOMsizeHalf \times \FOMsizeHalf}$ denote the matrix with diagonal entries equal to $1$ and superdiagonal entries equal to $-1$. We define 
	\begin{align*}
		\bm{T} \coloneq \begin{bmatrix} \widetilde{\bm{T}} &\bm{0}_{\FOMsizeHalf,\FOMsizeHalf} \\
		\bm{0}_{\FOMsizeHalf,\FOMsizeHalf} &\mathbb{I}_{\FOMsizeHalf}
		\end{bmatrix},
	\end{align*}
	and write the coordinate transformation in compact form $\widehat{\statex} = \bm{T} \statex$. In the transformed coordinates, the \pH system is given by $\SysLable(\newbmJ,\newbmR,\widetilde{\curlyH},\newbmB,\inputu,\outputy,\widehat{\statex}_{0})$, where
	\begin{align*}
		\newbmJ = \bm{T} \bmJ \bm{T}^{\top}, \quad \newbmR =  \bm{T} \bmR \bm{T}^{\top}, \quad \newbmQ = \bm{T}^{-T} \bmQ \bm{T}^{-1}, \quad \newbmB = \bm{T} \bmB, 
	\end{align*}
	as well as 
	\begin{align*}
		\widetilde{\curlyH}( \widehat{\statex}) = \frac{1}{2} \widehat{\statex}^{\top}\newbmQ\widehat{\statex} +\widetilde{\DEIMsplitHam}(\widehat{\statex}), \quad \text{and} \quad 	\widehat{\statex}_{0} = \bm{T} \statex_0. 
	\end{align*}
	
	In this example, we consider the nonlinear mass-spring-damper system with $\FOMsizeHalf = 500$ resulting in a system of $\FOMsize = 1000$. The system is simulated on the time interval $(0,100s]$, with zero initial condition, i.e., $\statex_0 = \bzero$. The remaining parameters are set to $k_1= k_2 =1$, $m_j\ \mathrm{and}\ \gamma_j = 0.3$ for $j = 1,\ldots, \FOMsizeHalf$. We use a uniform time discretization with step size $\Delta t =10^{-1}s$, resulting in $n_t=1001$ time instances. Further, we consider the two different inputs $u(t) =0.1$ and $u(t) = 0.1\sin(t)$.
	Since we consider nonlinear \pH systems, we use the \DEIM method for \pH systems introduced in \cite{chaturantabut2016structure} with tolerance $\epsilon_{\DEIM}= 10^{-8}$.\footnote{The authors of \cite{pagliantini2023gradient} proposed a gradient-preserving adaptive DEIM, which is different from the structure-preserving \DEIM that we use. However, for the nonlinear mass-spring-damper system we consider here, these two \DEIM approximations are equivalent.}
	We compare the \GMG-\POD-\ROM, \GMG-\QM-\ROM and \SPQuad-\POD-\ROM for reduced orders $\ROMsize$ from $6$ to $20$. The tolerance and maximum number of iterations of Newton's method is set to $10^{-8}$ and $20$ respectively. For the \GMG-\QM-\ROM, the regularization parameter is selected empirically according to
	$
	\lambda_{\rm reg} = \max\left(0.2*e_{\rm proj}(r),10^{-2.5}\right).
	$
	\begin{figure}[htbp]
		\centering 
		\subfloat[$u(t) = 0.1\sin (t)$]{
			\begin{tikzpicture}
				\label{fig:NL_MSD_time_state_sin}
				\begin{semilogyaxis}
					[
					width=.35\linewidth,
					height=.20\linewidth, 
					scale only axis,
					grid=both,
					grid style={line width=.1pt, draw=gray!10},
					major grid style={line width=.2pt, draw=gray!50},
					axis lines*=left,
					axis line style={line width=\AxisLineWidth},
					xmin=5.5, xmax=20.5,
					ymin=3e-4, ymax=3e-1,
					ytick={1e-4, 1e-3, 1e-2, 1e-1, 1e-0},
					yticklabels = {, $10^{-3}$, $10^{-2}$ , $10^{-1}$, },
					ylabel= $e_{\statex}$,
					xtick={8, 12, 16,20},
					ylabel style={at={(axis description cs:-0.15,0.5)},anchor=south},
					xlabel= Red. order $r$,
					xlabel style={at={(axis description cs:0.5,-0.1)},anchor=north},
					legend columns=3,
					legend entries={\SPQuad-\POD-\ROM, \GMG-\POD-\ROM, \GMG-\QM-\ROM, $e_{\statex, {\rm proj}}$, $e_{\statex,{\rm lowerbound}}$},
					legend to name= Legend_Fig_NLMSD_time,
					unbounded coords=jump, 
					]
					\addplot 
					[color=\ColorSP, mark=\MarkerSP, \LineStyleSP, mark size=\MarkerSize, line width=\LineWidth,mark options={solid}]
					table[x index=0, y index=1, col sep=comma] 
					{./Code/NL_MSD/Data/sin/Plot/Gugercin_state_error.dat};
					\addplot 
					[color=\ColorGMG, mark=\MarkerGMG,\LineStyleGMG, mark size=\MarkerSize, line width=\LineWidth,mark options={solid}]
					table[x index=0, y index=1, col sep=comma]  
					{./Code/NL_MSD/Data/sin/Plot/GMG_POD_state_error.dat};
					\addplot 
					[color=\ColorGMGQM, mark=\MarkerGMGQM, \LineStyleGMG, mark size=\MarkerSize, line width=\LineWidth,mark options={solid}]
					table[x index=0, y index=3, col sep=comma]  
					{./Code/NL_MSD/Data/sin/Plot/NL_state_error.dat};
					\addplot 
					[color=\ColorGMG, mark=\MarkerProj, \LineStyleProj, mark size=\MarkerSize, line width=\LineWidth,mark options={solid}]
					table[x index=0, y index=1, col sep=comma]  {./Code/NL_MSD/Data/sin/Plot/NLMSD_linear_proj_error_sin.dat};
					\addplot 
					[color =\ColorLowbound, mark=\MarkerProj, \LineStyleProj,  mark size=\MarkerSize, line width=\LineWidth,mark options={solid}]
					table[x index=0, y index=3, col sep=comma]  {./Code/NL_MSD/Data/sin/Plot/NLMSD_nonlinear_proj_lower_bound_sin.dat};
				\end{semilogyaxis}
			\end{tikzpicture}
		}
		\subfloat[$u(t) = 0.1$]{
			\begin{tikzpicture}
				\label{fig:NL_MSD_time_state_cons}
				\begin{semilogyaxis}
					[
					width=.35\linewidth,
					height=.20\linewidth, 
					scale only axis,
					grid=both,
					grid style={line width=.1pt, draw=gray!10},
					major grid style={line width=.2pt, draw=gray!50},
					axis lines*=left,
					axis line style={line width=\AxisLineWidth},
					xmin=5.5, xmax=20.5,
					ymin=3e-4, ymax=3e-1,
					ytick={1e-4, 1e-3, 1e-2, 1e-1, 1e-0},
					yticklabels=\empty, 
					xtick={8, 12, 16,20},
					xlabel= Red. order $r$,
					xlabel style={at={(axis description cs:0.5,-0.1)},anchor=north},
					]
					\addplot 
					[color=\ColorSP, mark=\MarkerSP, \LineStyleSP, mark size=\MarkerSize, line width=\LineWidth,mark options={solid}]
					table[x index=0, y index=1, col sep=comma]  {./Code/NL_MSD/Data/constant/Plot/Gugercin_state_error.dat};
					\addplot 
					[color=\ColorGMG, mark=\MarkerGMG,\LineStyleGMG, mark size=\MarkerSize, line width=\LineWidth,mark options={solid}]
					table[x index=0, y index=1, col sep=comma]  {./Code/NL_MSD/Data/constant/Plot/GMG_POD_state_error.dat};
					\addplot 
					[color=\ColorGMGQM, mark=\MarkerGMGQM, \LineStyleGMG, mark size=\MarkerSize, line width=\LineWidth,mark options={solid}]
					table[x index=0, y index=3, col sep=comma]  
					{./Code/NL_MSD/Data/constant/Plot/NL_state_error.dat};
					\addplot 
					[color=\ColorGMG, mark=\MarkerProj, \LineStyleProj, mark size=\MarkerSize, line width=\LineWidth,mark options={solid}]
					table[x index=0, y index=1, col sep=comma]  {./Code/NL_MSD/Data/constant/Plot/NLMSD_linear_proj_error_constant.dat};
					\addplot 
					[color =\ColorLowbound, mark=\MarkerProj, \LineStyleProj, mark size=\MarkerSize, line width=\LineWidth,mark options={solid}]
					table[x index=0, y index=3, col sep=comma]  {./Code/NL_MSD/Data/constant/Plot/NLMSD_nonlinear_proj_lower_bound_constant.dat};
				\end{semilogyaxis}
			\end{tikzpicture}
		}
		
		\subfloat[$u(t) = 0.1\sin (t)$]{
			\begin{tikzpicture}
				\label{fig:NL_MSD_time_output_sin}
				\begin{semilogyaxis}
					[
					width=.35\linewidth,
					height=.20\linewidth, 
					scale only axis,
					grid=both,
					grid style={line width=.1pt, draw=gray!10},
					major grid style={line width=.2pt, draw=gray!50},
					axis lines*=left,
					axis line style={line width=\AxisLineWidth},
					xmin=5.5, xmax=20.5,
					ymin=3e-4, ymax=3e-1,
					ytick={1e-4, 1e-3, 1e-2, 1e-1, 1e-0},
					yticklabels = {, $10^{-3}$, $10^{-2}$ , $10^{-1}$, },
					xtick={8, 12, 16,20},
					ylabel= $e_{\outputy}$,
					xlabel= Red. order $r$,
					xlabel style={at={(axis description cs:0.5,-0.1)},anchor=north},
					ylabel style={at={(axis description cs:-0.15,0.5)},anchor=south},
					]
					\addplot 
					[color=\ColorSP, mark=\MarkerSP, \LineStyleSP, mark size=\MarkerSize, line width=\LineWidth,mark options={solid}]
					table[x index=0, y index=1, col sep=comma] 
					{./Code/NL_MSD/Data/sin/Plot/Gugercin_output_error.dat};
					\addplot 
					[color=\ColorGMG, mark=\MarkerGMG,\LineStyleGMG, mark size=\MarkerSize, line width=\LineWidth,mark options={solid}]
					table[x index=0, y index=1, col sep=comma] 
					{./Code/NL_MSD/Data/sin/Plot/GMG_POD_output_error.dat};
					\addplot 
					[color=\ColorGMGQM, mark=\MarkerGMGQM, \LineStyleGMG, mark size=\MarkerSize, line width=\LineWidth,mark options={solid}]
					table[x index=0, y index=3, col sep=comma]  
					{./Code/NL_MSD/Data/sin/Plot/NL_output_error.dat};
				\end{semilogyaxis}
			\end{tikzpicture}
		}
		\subfloat[$u(t) = 0.1$]{
			\begin{tikzpicture}
				\label{fig:NL_MSD_time_output_cons}
				\begin{semilogyaxis}
					[
					width=.35\linewidth,
					height=.20\linewidth, 
					scale only axis,
					grid=both,
					grid style={line width=.1pt, draw=gray!10},
					major grid style={line width=.2pt, draw=gray!50},
					axis lines*=left,
					axis line style={line width=\AxisLineWidth},
					xmin=5.5, xmax=20.5,
					ymin=3e-4, ymax=3e-1,
					ytick={1e-4, 1e-3, 1e-2, 1e-1, 1e-0},
					yticklabels=\empty, 
					xtick={8, 12, 16,20},
					xlabel= Red. order $r$,
					xlabel style={at={(axis description cs:0.5,-0.1)},anchor=north},
					]
					\addplot 
					[color=\ColorSP, mark=\MarkerSP, \LineStyleSP, mark size=\MarkerSize, line width=\LineWidth,mark options={solid}]
					table[x index=0, y index=1, col sep=comma]  {./Code/NL_MSD/Data/constant/Plot/Gugercin_output_error.dat};
					\addplot 
					[color=\ColorGMG, mark=\MarkerGMG,\LineStyleGMG, mark size=\MarkerSize, line width=\LineWidth,mark options={solid}]
					table[x index=0, y index=1, col sep=comma]  {./Code/NL_MSD/Data/constant/Plot/GMG_POD_output_error.dat};
					\addplot 
					[color=\ColorGMGQM, mark=\MarkerGMGQM, \LineStyleGMG, mark size=\MarkerSize, line width=\LineWidth,mark options={solid}]
					table[x index=0, y index=3, col sep=comma]  
					{./Code/NL_MSD/Data/constant/Plot/NL_output_error.dat};
				\end{semilogyaxis}
			\end{tikzpicture}
		}
		
		\pgfplotslegendfromname{Legend_Fig_NLMSD_time}
	\caption{Reduction error $e_{\statex, {\rm red}}$ \eqref{eq:red_error} and output error $e_{\outputy}$ \eqref{eq:output_error} obtained by the \SPQuad-\POD-\ROM,  \GMG-\POD-\ROM, and \GMG-\QM-\ROM for the nonlinear mass-spring-damper system with inputs $\inputu(t) = 0.1$ and $\inputu(t)=0.1\sin(t)$. The dotted green lines with square markers denotes the projection error $e_{\statex, {\rm proj}}$ \eqref{eq:proj_error} of corresponding linear approximation maps. The lower bound for the quadratic \ROMs $e_{\statex,{\rm lowerbound}}$ \eqref{equ:non_app_lower_bound} is depicted by a dotted cyan line with square markers.
	}
		\label{fig:NL_MSD_time}
	\end{figure}

	\begin{figure}[htbp]
		\centering 
		\subfloat[$u(t) = 0.1\sin(t)$]{
			\begin{tikzpicture}
				\begin{semilogyaxis}
					[
					width=.35\linewidth,
					height=.20\linewidth, 
					scale only axis,
					grid=both,
					grid style={line width=.1pt, draw=gray!10},
					major grid style={line width=.2pt, draw=gray!50},
					axis lines*=left,
					axis line style={line width=\AxisLineWidth},
					xmin=0, xmax=100,
					ymin=1e-8, ymax=1e-4,
					xtick={0, 20, 40, 60, 80, 100},
					ytick={1e-8, 1e-7, 1e-6, 1e-5, 1e-4},
					yticklabels = {, $10^{-7}$,  , $10^{-5}$, },
					ylabel= $e_{\rm energy}(t_{\text{end}})$,
					xlabel= Time (s),
					xlabel style={at={(axis description cs:0.5,-0.1)},anchor=north},
					ylabel style={at={(axis description cs:-0.15,0.5)},anchor=south},
					legend columns=4,
					legend entries={\SPQuad-\POD-\ROM, \GMG-\POD-\ROM,\GMG-\QM-\ROM,\FOM},
					legend to name= Legend_Fig_NLMSD_energy,
					unbounded coords=jump, 
					]
					\addplot 
					[color=\ColorSP, solid,  line width=\LineWidth, no markers]
					table[x index=0, y index=6, col sep=comma]  {./Code/NL_MSD/Data/sin/Plot/Gugercin_energy_dif.dat};
					\addplot 
					[color = \ColorGMG, dashed, green, line width=\LineWidth,no markers]
					table[x index=0, y index=6, col sep=comma]  {./Code/NL_MSD/Data/sin/Plot/GMG_POD_energy_dif.dat};
					\addplot 
					[color = \ColorGMGQM,  loosely dashed, line width=\LineWidth,no markers]
					table[x index=0, y index=6, col sep=comma]  {./Code/NL_MSD/Data/sin/Plot/NL_energy_dif_r_q_3.dat};
					\addplot 
					[color = \ColorFOM, dotted, line width=\LineWidth,no markers]
					table[x index=0, y index=1, col sep=comma]  {./Code/NL_MSD/Data/sin/Plot/FOM_energy_dif.dat};
				\end{semilogyaxis}
			\end{tikzpicture}
		}
		\subfloat[$u(t) = 0.1$]{
			\begin{tikzpicture}
				\begin{semilogyaxis}
					[
					width=.35\linewidth,
					height=.20\linewidth, 
					scale only axis,
					grid=both,
					grid style={line width=.1pt, draw=gray!10},
					major grid style={line width=.2pt, draw=gray!50},
					axis lines*=left,
					axis line style={line width=\AxisLineWidth},
					xmin=0, xmax=100,
					ymin=1e-8, ymax=1e-4,
					xtick={0, 20, 40, 60, 80, 100},
					ytick={1e-8, 1e-7, 1e-6, 1e-5, 1e-4},
					yticklabels= \empty,
					xlabel= Time (s),
					xlabel style={at={(axis description cs:0.5,-0.1)},anchor=north},
					ylabel style={at={(axis description cs:-0.15,0.5)},anchor=south},
					]
					\addplot 
					[color=\ColorSP,  solid,  line width=\LineWidth, no markers]
					table[x index=0, y index=6, col sep=comma]  {./Code/NL_MSD/Data/constant/Plot/Gugercin_energy_dif.dat};
					\addplot 
					[color = \ColorGMG, dashed, green, line width=\LineWidth,no markers]
					table[x index=0, y index=6, col sep=comma]  {./Code/NL_MSD/Data/constant/Plot/GMG_POD_energy_dif.dat};
					\addplot 
					[color = \ColorGMGQM,  loosely dashed, line width=\LineWidth,no markers]
					table[x index=0, y index=6, col sep=comma]  {./Code/NL_MSD/Data/constant/Plot/NL_energy_dif_r_q_3.dat};
					\addplot 
					[color = \ColorFOM, dotted, line width=\LineWidth,no markers]
					table[x index=0, y index=1, col sep=comma] 
					{./Code/NL_MSD/Data/constant/Plot/FOM_energy_dif.dat};
				\end{semilogyaxis}
			\end{tikzpicture}
		}
		
		\pgfplotslegendfromname{Legend_Fig_NLMSD_energy}
		\caption{Error in the energy balance equation \eqref{equ:def_energy_conserve_error} of the \FOM and \ROMs with a reduced order of 16 obtained by the \SPQuad-\POD-\ROM, \GMG-\POD-\ROM, and \GMG-\QM-\ROM for the nonlinear mass-spring-damper system with two different inputs.}
		\label{fig:NL_MSD_energy_error}
	\end{figure}	

	Figure \ref{fig:NL_MSD_time} shows that the \SPQuad-\POD-\ROM and \GMG-\POD-\ROM have close performance with respect of the state error, while the \GMG-\POD-\ROM achieves a smaller output error than the \SPQuad-\POD-\ROM. By considering a quadratic embedding, the resulting \GMG-\QM-\ROM further improves both, the state and the output error. 
	Regarding the error in the energy balance equation \eqref{equ:energy_conserve_time_int}, Figure \ref{fig:NL_MSD_energy_error} shows that all considered \ROMs attain a level of accuracy comparable to that of the \FOM. 
	
\section{Conclusions}
	\label{Sec:Outlook}

We propose a new structure-preserving \MOR \framework for \pH systems with general nonlinear approximation maps based on the \GMG projection. 
We established sufficient conditions for structure preservation, showed that the associated non-degeneracy conditions are generically satisfied, and provided two concrete realizations with a linear and a quadratic approximation map for linear and nonlinear \pH systems
Numerical examples of a linear and a nonlinear mass-spring-damper system are investigated, and the \GMG-\POD-\ROM and the \GMG-\QM-\ROM provide a lower relative error compared to known methods in the literature in terms of state and output. 
With respect to future work, we aim to consider different classes of \pH systems, e.g., a \pH system, where the resistive term is a nonlinear function of $\DivHamFOM$ or employing structure-preserving neural networks to speed up the evaluation of a state-dependent systems matrices in a reduced order \pH model.  
	
\appendix 
\section{Full proof of Theorem~\ref{Theo:exists_JR_inv}} \label{sec_app_proofs}

Before including the full proof of Theorem~\ref{Theo:exists_JR_inv}, we state one lemma.
	\begin{lemma}
		\label{lemma:ker_JR}
		Let $\bmJ=-\bmJ^{\top}\in\R^{\FOMsize\times \FOMsize}$ be a skew-symmetric matrix and let $\bmR=\bmR^{\top}\in\R^{\FOMsize\times\FOMsize}$ be a positive semi-definite matrix. Then
		\begin{equation*}
			\mathrm{det}\left(\bmJ-\bmR\right)\neq 0 
			\Leftrightarrow
			\mathrm{ker}\left(\bmJ\right)\cap \mathrm{ker}\left(\bmR\right)=\{\bzero\}.
		\end{equation*} 
	\end{lemma}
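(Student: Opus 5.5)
We prove both implications through the contrapositive and the quadratic form of $\bmJ-\bmR$. The key identity is that for any $\statex\in\R^{\FOMsize}$, skew-symmetry of $\bmJ$ gives $\statex^{\top}\bmJ\statex=0$. Hence
\begin{align*}
\statex^{\top}(\bmJ-\bmR)\statex = -\statex^{\top}\bmR\statex .
\end{align*}
We also use the standard fact that for a symmetric positive semi-definite $\bmR$, $\statex^{\top}\bmR\statex=0$ holds if and only if $\bmR\statex=\bzero$. To see this, write $\bmR=\bmR^{1/2}\bmR^{1/2}$, so that $\statex^{\top}\bmR\statex=\|\bmR^{1/2}\statex\|_2^2$. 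If this vanishes, then $\bmR^{1/2}\statex=\bzero$, and therefore $\bmR\statex=\bzero$.

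For ``$\Rightarrow$'', suppose $\mathrm{ker}(\bmJ)\cap\mathrm{ker}(\bmR)$ contains some $\statex\neq\bzero$. Then $(\bmJ-\bmR)\statex=\bmJ\statex-\bmR\statex=\bzero$, so $\bmJ-\bmR$ is singular. This contradicts $\mathrm{det}(\bmJ-\bmR)\neq0$.

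For ``$\Leftarrow$'', suppose $\mathrm{det}(\bmJ-\bmR)=0$. Then there is $\statex\neq\bzero$ with $(\bmJ-\bmR)\statex=\bzero$. Multiplying from the left by $\statex^{\top}$ and using the identity above gives $\statex^{\top}\bmR\statex=0$, so $\bmR\statex=\bzero$ by positive semi-definiteness. Substituting back into $(\bmJ-\bmR)\statex=\bzero$ yields $\bmJ\statex=\bzero$. Thus $\statex\in\mathrm{ker}(\bmJ)\cap\mathrm{ker}(\bmR)$ with $\statex\neq\bzero$, contradicting the trivial intersection.

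The only step that is not purely mechanical is going from $\statex^{\top}\bmR\statex=0$ to $\bmR\statex=\bzero$, which relies on the square-root factorization above. The rest is direct substitution, so we expect no real obstacle.
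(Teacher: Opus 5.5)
Your proof is correct and follows essentially the same route as the paper. The forward direction is identical. For the reverse direction, the paper argues directly by splitting a nonzero $\statex$ into the cases $\statex\in\ker(\bmR)$ and $\statex\notin\ker(\bmR)$, while you argue by contrapositive; both rest on the identity $\statex^{\top}(\bmJ-\bmR)\statex=-\statex^{\top}\bmR\statex$ and on the fact that $\statex^{\top}\bmR\statex=0$ forces $\bmR\statex=\bzero$. Your explicit square-root justification of that fact is a small improvement, since the paper uses it only implicitly when it asserts $-\statex^{\top}\bmR\statex<0$ for $\statex\notin\ker(\bmR)$.
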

	\begin{proof}
	We start by showing that $\mathrm{det}\left(\bmJ-\bmR\right)\neq 0 
		\Rightarrow
		\mathrm{ker}\left(\bmJ\right)\cap \mathrm{ker}\left(\bmR\right)=\{\bzero\}$, which we prove by contraposition. 
	If $\mathrm{ker}\left(\bmJ\right)\cap \mathrm{ker}\left(\bmR\right)\neq\{\bzero\}$, then there exist $\statex\in\mathrm{ker}\left(\bmJ\right)\cap \mathrm{ker}\left(\bmR\right)$ s.t. $\statex\neq 0$ elementwise and 
			$(\bmJ-\bmR)\statex =\bzero$. As a result we get that $\mathrm{det}\left(\bmJ-\bmR\right)= 0$. 
			
			For the converse direction, we assume that $\mathrm{ker}\left(\bmJ\right)\cap \mathrm{ker}\left(\bmR\right)=\{\bzero\}$ and we consider an arbitrary non-zero vector $\statex\in\R^{\FOMsize}$. 
			If $\statex\in\mathrm{ker}(\bmR)$, then $\bmJ\statex\neq\bzero$. Thus, $(\bmJ-\bmR)\statex 
			=\bmJ\statex
			\neq\bzero$. If $\statex\notin\mathrm{ker}(\bmR)$, then $\statex^{\top}(\bmJ-\bmR)\statex = -\statex^{\top}\bmR\statex < 0$, which results to $(\bmJ-\bmR)\statex \neq\bzero$.
			Consequently,  $\mathrm{det}\left(\bmJ-\bmR\right)\neq 0 $.

	\end{proof}

\begin{proof}[Proof of Theorem~\ref{Theo:exists_JR_inv}]
			For any $1\le\ROMsize\leq \FOMsize$, let $f_{\mathrm{det},\ROMsize,1}\colon\R^{\FOMsize\times\ROMsize}\rightarrow \R$ be defined by 
			\begin{align*}
				f_{\mathrm{det},\ROMsize,1}\colon\bmU\mapsto \allowbreak \mathrm{det}\left(\bmU^{\top}\left(\Jcheck-\Rcheck\right)\bmU\right).
			\end{align*}
			Note that $f_{\mathrm{det},\ROMsize,1}$ is a polynomial with respect to the entries of $\bmU$. This, means that $f_{\mathrm{det},\ROMsize,1}$ is a real analytic function.
			By Lemma~\ref{lemma:analytic_measure}, it is sufficient to show that $f_{\mathrm{det},\ROMsize,1}$ is not identically zero. Indeed, if $f_{\mathrm{det},\ROMsize,1} \not\equiv 0$, then its zero set has Lebesgue measure zero. Therefore 
			\begin{equation*}
				\mathrm{det}\left(\bmU^{\top}\left(\Jcheck-\Rcheck\right)\bmU\right)\neq 0,
			\end{equation*}
			holds for almost every $ \bmU\in\R^{\FOMsize\times \ROMsize} $ for $\ROMsize=1,\ldots,\FOMsize$. 
			In the following, we show, that such $\bmU$ exists. To this end, we consider the two different cases $\mathrm{det}(\Jcheck)=0$, $\mathrm{det}(\Jcheck)\neq0$ and provide for each case an explicit construction of $\bmU$. 
			
We first consider the case $\mathrm{det}(\Jcheck)=0$. By \cite[Theorem 2.2]{benner2000cholesky} there exists an invertible matrix
					\begin{equation*}
						\bmV=
						\begin{bmatrix}
							\bmV_1,\bmV_2
						\end{bmatrix}\in\R^{\FOMsize\times \FOMsize},\
						\mathrm{with}\
						\bmV_1 =[\bmv_1,\ldots,\bmv_{2\ell}]
						\in\R^{\FOMsize\times 2\ell}\
						,\
						\bmV_2\in\R^{\FOMsize\times\ (\FOMsize-2\ell)}
						\
						(2\ell<\FOMsize),
					\end{equation*}
					such that $\Jcheck \bmV_2 = \bzero$ and 
					\begin{equation*}
						\label{equ:GMG_exists_proof_SR_decomp}
						\bmV^{\top}\Jcheck\bmV = \mathrm{diag}(\jtwo,\ldots,\jtwo,\underbrace{0\ldots,0}_{\FOMsize-2\ell}),
					\end{equation*}
					where $\jtwo =\begin{pmatrix}
						0&-1\\
						1&0
					\end{pmatrix}$. 
From $\Jcheck \bmV_2 = \bzero$, we know that $\mathrm{span}(\bmV_2)\subseteq \mathrm{ker}\left(\Jcheck\right)$. Since $\mathrm{det}(\Jcheck-\Rcheck)\neq 0$, 
					we follow from Lemma \ref{lemma:ker_JR} that $\mathrm{span}(\bmV_2)\cap \ker(\Rcheck)=\{\bzero\}$. 
				 	Further, the matrix $\bmV_2^{\top}\Rcheck\bmV_2$ is symmetric positive definite matrix. Then, there exists an invertible matrix $\bmK\in\R^{(\FOMsize-2\ell)\times (\FOMsize-2\ell)}$ s.t.
					\begin{align*}
						\bmK^{\top}\bmV_2^{\top}\Rcheck\bmV_2\bmK \allowbreak= \mathrm{diag}
						(\lambda_1,\ldots,\lambda_{\FOMsize-2\ell}),
						\ \mathrm{where}\
						\lambda_{i}>0, i=1,\ldots, \FOMsize-2\ell.
					\end{align*}
					Let $\bmW := \bmV_2\bmK$ and denote its columns by $\{\bmw_1,\ldots,\bmw_{\FOMsize-2\ell}\}$. For a fixed $r \in \{1, \dots, \FOMsize\}$, we choose integers $i,j$ such that 					$0\leq i\leq \ell, 0\leq j\leq \FOMsize-2\ell$ and $r = 2i+j$. We set $\bmU$ to contain the first $2i$ columns of $\bmV_1$ and the $j$ first columns of $\bmW$, i.e., 
					\begin{align*}\bmU \coloneqq 
					\begin{bmatrix}
						\bmv_1,\ldots,\bmv_{2i},\bmw_1,\ldots,\bmw_j
					\end{bmatrix},
					\end{align*}
					and further define $\bmU_2 := \begin{bmatrix} \bmw_1,\ldots,\bmw_j \end{bmatrix}$. By construction, we arrive at 
					\begin{equation*}
						\bmU^{\top}\Jcheck\bmU=
						\mathrm{diag}(\underbrace{\jtwo,\ldots,\jtwo}_{i},\underbrace{0\ldots,0}_{j}),\
						\mathrm{and}\
						\bmU_2^{\top}\Rcheck\bmU_2=
						\mathrm{diag}
						\left(\lambda_1,\ldots,\lambda_j\right).
					\end{equation*}
					We then arrive at $\mathrm{ker}\left(\bmU^{\top}\Jcheck\bmU\right)\cap \mathrm{ker}\left(\bmU^{\top}\Rcheck\bmU\right) =\{\bzero\}.$
					By Lemma~\ref{lemma:ker_JR} we get that\\ $\mathrm{det}\left(\bmU^{\top}\left(\Jcheck-\Rcheck\right)\bmU\right)\neq 0$.\\
					We now consider the case that $\mathrm{det}(\Jcheck)\neq 0.$ Again, by \cite[Theorem 2.2]{benner2000cholesky}, there exists an invertible matrix
						$\bmV=[\bmv_1,\ldots,\bmv_{\FOMsize}] \in \mathbb{R}^{\FOMsize \times \FOMsize}$
					such that
					\begin{equation*}
						\label{equ:GMG_exists_proof_SR_decomp}
						\bmV^{\top}\Jcheck\bmV = \mathrm{diag}(\jtwo,\ldots,\jtwo)\in\R^{\FOMsize\times \FOMsize}.
					\end{equation*}
                If $\bmR\neq \bzero$, then there exists $\statex\in\R^{\FOMsize}$, s.t.
                     \begin{align*}
                     	0>
                     	-\statex^{\top}\bmR\statex
                     	=
                     	\statex^{\top}(\bmJ-\bmR)^{\top}(\Jcheck-\Rcheck)(\bmJ-\bmR)\statex
                     	=
                     	-
                     	\statex^{\top}(\bmJ-\bmR)^{\top}\Rcheck(\bmJ-\bmR)\statex.
                     \end{align*}
                     Thus,  $\Rcheck\neq \bzero$ and $\Rcheck\succeq 0$.
                     Since $\bmV$ is invertible, there exists $\bmv\in\{\bmv_i\}_{i=1}^{\FOMsize}$ s.t. $\bmv^{\top}\Rcheck \bmv>0$. 
					Without loss of generality, we assume that $\bmv_{\FOMsize}^{\top}\Rcheck \bmv_{\FOMsize}>0$. If $\ROMsize$ is an \textbf{even} number we construct $\bmU$ by $\bmU=\begin{bmatrix} \bmv_1,\ldots,\bmv_{\ROMsize} \end{bmatrix}$. Then $\mathrm{ker}(\bmU^{\top}\Jcheck\bmU)=  \{\bzero\}$ and by Lemma~\ref{lemma:ker_JR}, we get $\mathrm{det}(\bmU^{\top}\left(\Jcheck-\Rcheck\right)\bmU)\neq 0$. \\
					If $\ROMsize$ is an \textbf{odd} number, we construct $\bmU$ by $\bmU\coloneqq
					\begin{bmatrix}
						\bmv_1,\ldots,\bmv_{\ROMsize-1},\bmv_{\FOMsize}
					\end{bmatrix}$
					Since
					\begin{equation*}
						\bmU^{\top}\Jcheck\bmU=
						\mathrm{diag}(\underbrace{\jtwo,\ldots,\jtwo}_{(r-1)/2},0),\
						\mathrm{and}\
						v_{\FOMsize}^{\top}\Rcheck v_{\FOMsize}>0,
					\end{equation*}
					we have 
					$\mathrm{ker}\left(\bmU^{\top}\Jcheck\bmU\right)\cap
					\mathrm{ker}\left(\bmU^{\top}\Rcheck\bmU\right)
					=\{\bzero\}.$ 
					By Lemma~\ref{lemma:ker_JR}, $\mathrm{det}\left(\bmU^{\top}\left(\Jcheck-\Rcheck\right)\bmU\right)\neq 0$ holds.	
\end{proof}

\bibliographystyle{abbrvurl}
\bibliography{References}

@article{kotyczka2018discrete,
	title={Discrete-time port-{H}amiltonian systems based on {G}auss-{L}egendre collocation},
	author={Kotyczka, Paul and Lefevre, Laurent},
	journal={IFAC-PapersOnLine},
	volume={51},
	number={3},
	pages={125--130},
	year={2018},
	publisher={Elsevier}
}

@article{BennerGugercinWilcox2015,
author = {Benner, Peter and Gugercin, Serkan and Willcox, Karen},
title = {A Survey of Projection-Based Model Reduction Methods for Parametric Dynamical Systems},
journal = {SIAM Review},
volume = {57},
number = {4},
pages = {483-531},
year = {2015},
doi = {10.1137/130932715}
}

@book{BennerOhlbergerCW2017,
author = {Benner, Peter and Ohlberger, Mario and Cohen, Albert and Willcox, Karen},
title = {Model Reduction and Approximation},
publisher = {Society for Industrial and Applied Mathematics},
year = {2017},
doi = {10.1137/1.9781611974829},
address = {Philadelphia, PA},
edition   = {}
}

@article{Hesthaven_Peherstorfer_Unger_2026, 
	title={Nonlinear model reduction for transport-dominated problems}, 
	volume={35}, 
	DOI={10.1017/S0962492926100294}, 
	journal={Acta Numerica}, 
	author={Hesthaven, Jan S. and Peherstorfer, Benjamin and Unger, Benjamin}, 
	year={2026}, 
	pages={173–272}
}

@article{Rettberg31122023,
author = {Johannes Rettberg and Dominik Wittwar and Patrick Buchfink and Alexander Brauchler and Pascal Ziegler and Jörg Fehr and Bernard Haasdonk},
title = {Port-{H}amiltonian fluid–structure interaction modelling and structure-preserving model order reduction of a classical guitar},
journal = {Mathematical and Computer Modelling of Dynamical Systems},
volume = {29},
number = {1},
pages = {116--148},
year = {2023},
publisher = {Taylor \& Francis},
doi = {10.1080/13873954.2023.2173238}
}

@article{BUCHFINK2026109784,
title = {Energy-stable port-{H}amiltonian systems},
journal = {Applied Mathematics Letters},
volume = {173},
pages = {109784},
year = {2026},
issn = {0893-9659},
doi = {https://doi.org/10.1016/j.aml.2025.109784},
author = {Patrick Buchfink and Silke Glas and Hans Zwart}
}

@article{GruberTezaur2025,
author = {Gruber, Anthony and Tezaur, Irina},
title = {{Variationally Consistent Hamiltonian Model Reduction}},
journal = {SIAM Journal on Applied Dynamical Systems},
volume = {24},
number = {1},
pages = {376-414},
year = {2025},
doi = {10.1137/24M1652490}
}

@book{Antoulas2005,
author = {Antoulas, Athanasios C.},
title = {Approximation of Large-Scale Dynamical Systems},
publisher = {Society for Industrial and Applied Mathematics},
year = {2005},
doi = {10.1137/1.9780898718713},
address = {},
edition   = {}
}

@article{mehrmann2023control,
	title={Control of port-{H}amiltonian differential-algebraic systems and applications},
	author={Mehrmann, Volker and Unger, Benjamin},
	journal={Acta Numerica},
	volume={32},
	pages={395--515},
	year={2023},
	publisher={Cambridge University Press}
}

@article{Mityagin2020,
	author = {B. S. Mityagin},
	title = {The Zero Set of a Real Analytic Function},
	journal = {Mathematical Notes},
	volume = {107},
	pages = {529--530},
	year = {2020},
	doi = {10.1134/S0001434620030189}
}

@article{buchfink2024approximation,
	author = {Patrick Buchfink and Silke Glas and Bernard Haasdonk},
	doi = {10.5802/crmath.632},
	issn = {1778-3569},
	issue = {G13},
	journal = {Comptes Rendus. Mathématique},
	month = {12},
	pages = {1881-1891},
	title = {Approximation Bounds for Model Reduction on Polynomially Mapped Manifolds},
	volume = {362},
	year = {2024}
}

@article{benner2000cholesky,
	author = {Peter Benner and Ralph Byers and Heike Fassbender and Volker Mehrmann and David Watkins},
	issn = {1068-9613},
	journal = {Electronic Transactions on Numerical Analysis (ETNA)},
	pages = {85-93},
	title = {Cholesky-like factorizations of skew-symmetric matrices},
	volume = {11},
	year = {2000}
}

@article{rettberg2024data,
	author = {Johannes Rettberg and Jonas Kneifl and Julius Herb and Patrick Buchfink and Jörg Fehr and Bernard Haasdonk},
	month = {8},
	title = {Data-driven identification of latent port-{H}amiltonian systems},
	year = {2024},
	journal={arXiv preprint arXiv:2408.08185}
}

@article{geng2025data,
	author = {Yuwei Geng and Lili Ju and Boris Kramer and Zhu Wang},
	doi = {10.1016/j.cma.2025.118042},
	issn = {00457825},
	journal = {Computer Methods in Applied Mechanics and Engineering},
	month = {7},
	pages = {118042},
	publisher = {Elsevier B.V.},
	title = {Data-driven reduced-order models for port-{H}amiltonian systems with operator inference},
	volume = {442},
	year = {2025}
}

@article{greif2019decay,
	author = {Constantin Greif and Karsten Urban},
	doi = {10.1016/j.aml.2019.05.013},
	issn = {18735452},
	journal = {Applied Mathematics Letters},
	pages = {216-222},
	publisher = {Elsevier Ltd},
	title = {Decay of the {K}olmogorov {N}-width for wave problems},
	volume = {96},
	year = {2019}
}

@article{polyuga2012effort,
	author = {Rostyslav V. Polyuga and Arjan J. van der Schaft},
	doi = {10.1016/j.sysconle.2011.12.008},
	issn = {01676911},
	issue = {3},
	journal = {Systems and Control Letters},
	pages = {412-421},
	title = {Effort- and flow-constraint reduction methods for structure preserving model reduction of port-{H}amiltonian systems},
	volume = {61},
	year = {2012}
}

@article{ionescu2013families,
	author = {Tudor C. Ionescu and Alessandro Astolfi},
	doi = {10.1016/j.automatica.2013.05.006},
	issn = {00051098},
	issue = {8},
	journal = {Automatica},
	pages = {2424-2434},
	publisher = {Elsevier Ltd},
	title = {Families of moment matching based, structure preserving approximations for linear port-{H}amiltonian systems},
	volume = {49},
	year = {2013}
}

@article{Califano2021geometric,
	author = {Federico Califano and Ramy Rashad and Frederic P. Schuller and Stefano Stramigioli},
	doi = {10.1063/5.0048359},
	issn = {10897666},
	issue = {4},
	journal = {Physics of Fluids},
	month = {4},
	publisher = {American Institute of Physics Inc.},
	title = {Geometric and energy-aware decomposition of the {N}avier-{S}tokes equations: {A} port-{H}amiltonian approach},
	volume = {33},
	year = {2021}
}

@article{pagliantini2023gradient,
	author = {Cecilia Pagliantini and Federico Vismara},
	doi = {10.1137/22M1503890},
	issn = {10957197},
	issue = {5},
	journal = {SIAM Journal on Scientific Computing},
	month = {10},
	pages = {A2725-A2754},
	publisher = {Society for Industrial and Applied Mathematics Publications},
	title = {Gradient-preserving hyper-reduction of nonlinear dynamical systems via discrete empirical interpolation},
	volume = {45},
	year = {2023}
}

@inbook{antoulas2010interpolatory,
	author = {Athanasios C. Antoulas and Christopher A. Beattie and Serkan Gugercin},
	doi = {10.1007/978-1-4419-5757-3_1},
	booktitle = {Efficient modeling and control of large-scale systems},
	pages = {3-58},
	publisher = {Springer},
	title = {Interpolatory Model Reduction of Large-Scale Dynamical Systems},
	year = {2010}
}

@book{jacob2012linear,
	author = {Birgit Jacob and Hans Zwart},
	doi = {10.1007/978-3-0348-0399-1},
	publisher = {Springer Science \& Business Media},
	title = {Linear Port-{H}amiltonian Systems on Infinite-dimensional Spaces},
	year = {2012}
}

@article{buchfink2024model,
	author = {Patrick Buchfink and Silke Glas and Bernard Haasdonk and Benjamin Unger},
	doi = {10.1016/j.physd.2024.134299},
	issn = {01672789},
	journal = {Physica D: Nonlinear Phenomena},
	publisher = {Elsevier B.V.},
	title = {Model reduction on manifolds: {A} differential geometric framework},
	volume = {468},
	year = {2024}
}

@book{duindam2009modeling,
	author = {Vincent Duindam and Alessandro Macchelli and Stefano Stramigioli and Herman Bruyninckx},
	doi = {10.1007/978-3-642-03196-0},
	publisher = {Springer Science \& Business Media},
	title = {Modeling and control of complex physical systems the port-{H}amiltonian approach},
	year = {2009}
}

@phdthesis{annoni2016modeling,
	author = {Jennifer Annoni},
	school = {University of Minnesota},
	title = {Modeling for Wind Farm Control},
	year = {2016}
}

@inproceedings{ionescu2013moment,
	author = {Tudor C. Ionescu and Alessandro Astolfi},
	doi = {10.3182/20130904-3-RO-2023.00060},
	issue = {23},
	booktitle = {IFAC-Proceedings},
	pages = {395-399},
	title = {Moment matching for nonlinear port-{H}amiltonian and gradient systems},
	volume = {46},
	year = {2013}
}

@Inbook{Beattie2022,
author="Beattie, Christopher
and Gugercin, Serkan
and Mehrmann, Volker",
title="Structure-Preserving Interpolatory Model Reduction for Port-{H}amiltonian Differential-Algebraic Systems",
bookTitle="Realization and Model Reduction of Dynamical Systems: A Festschrift in Honor of the 70th Birthday of Thanos Antoulas",
year="2022",
publisher="Springer International Publishing",
address="Cham",
pages="235--254",
doi="10.1007/978-3-030-95157-3_13"
}

@article{falaize2016passive,
	author = {Antoine Falaize and Thomas Hélie},
	doi = {10.3390/app6100273},
	issn = {20763417},
	issue = {10},
	journal = {Applied Sciences (Switzerland)},
	month = {9},
	publisher = {MDPI AG},
	title = {Passive guaranteed simulation of analog audio circuits: {A} port-{H}amiltonian approach},
	volume = {6},
	year = {2016}
}

@article{wolf2010passivity,
	author = {Thomas Wolf and Boris Lohmann and Rudy Eid and Paul Kotyczka},
	doi = {10.3166/ejc.16.401-406},
	issn = {09473580},
	issue = {4},
	journal = {European Journal of Control},
	pages = {401-406},
	publisher = {European Control Association},
	title = {Passivity and structure preserving order reduction of linear port-{H}amiltonian systems using {K}rylov subspaces},
	volume = {16},
	year = {2010}
}

@article{morandin2023port,
	author = {Riccardo Morandin and Jonas Nicodemus and Benjamin Unger},
	doi = {10.1137/22M149329X},
	issn = {10957197},
	issue = {4},
	journal = {SIAM Journal on Scientific Computing},
	pages = {A1690-A1710},
	publisher = {Society for Industrial and Applied Mathematics Publications},
	title = {Port-{H}amiltonian dynamic mode decomposition},
	volume = {45},
	year = {2023}
}

@article{vanderschaft2014port,
	author = {Arjan J. van der Schaft and Dimitri Jeltsema},
	doi = {10.1561/2600000002},
	issn = {23256826},
	issue = {2-3},
	journal = {Foundations and Trends in Systems and Control},
	pages = {173-378},
	publisher = {Now Publishers Inc},
	title = {Port-{H}amiltonian systems theory: {A}n introductory overview},
	volume = {1},
	year = {2014}
}

@article{moore1981principal,
	author = {Bruce Moore},
	doi = {10.1109/TAC.1981.1102568},
	issue = {1},
	journal = {IEEE Transactions on Automatic Control},
	pages = {17-32},
	title = {Principal component analysis in linear systems: {C}ontrollability observability and model reduction},
	volume = {26},
	year = {1981}
}

@inproceedings{ohlberger2016reduced,
	author = {Mario Ohlberger and Stephan Rave},
	booktitle = {Proceedings of ALGORITMY},
	pages = {1-12},
	title = {Reduced basis methods: {S}uccess, limitations and future Challenges},
	year = {2016}
}

@article{sato2018riemannian,
	author = {Kazuhiro Sato},
	doi = {10.1016/j.automatica.2018.03.051},
	issn = {00051098},
	journal = {Automatica},
	month = {7},
	pages = {428-434},
	publisher = {Elsevier Ltd},
	title = {Riemannian optimal model reduction of linear port-{H}amiltonian systems},
	volume = {93},
	year = {2018}
}

@article{kawano2018structure,
	author = {Yu Kawano and Jacquelien M.A. Scherpen},
	doi = {10.1109/TAC.2018.2811787},
	issn = {15582523},
	issue = {12},
	journal = {IEEE Transactions on Automatic Control},
	pages = {4286-4293},
	publisher = {Institute of Electrical and Electronics Engineers Inc.},
	title = {Structure preserving truncation of nonlinear port-{H}amiltonian systems},
	volume = {63},
	year = {2018}
}

@article{chaturantabut2016structure,
	author = {Saifon Chaturantabut and Christopher A. Beattie and Serkan Gugercin},
	doi = {10.1137/15M1055085},
	issn = {10957197},
	issue = {5},
	journal = {SIAM Journal on Scientific Computing},
	pages = {B837-B865},
	publisher = {Society for Industrial and Applied Mathematics Publications},
	title = {Structure-preserving model reduction for nonlinear port-{H}amiltonian systems},
	volume = {38},
	year = {2016}
}

@article{schulze2023structure,
	author = {Philipp Schulze},
	doi = {10.3389/fams.2023.1160250},
	journal = {Frontiers in Applied Mathematics and Statistics},
	pages = {1160250},
	title = {Structure-preserving model reduction for port-{H}amiltonian systems based on separable nonlinear approximation ansatzes},
	volume = {9},
	year = {2023}
}

@article{gugercin2012structure,
	author = {Serkan Gugercin and Rostyslav V. Polyuga and Christopher Beattie and Arjan J. van der Schaft},
	doi = {10.1016/j.automatica.2012.05.052},
	issn = {00051098},
	issue = {9},
	journal = {Automatica},
	pages = {1963-1974},
	title = {Structure-preserving tangential interpolation for model reduction of port-{H}amiltonian systems},
	volume = {48},
	year = {2012}
}

@article{sirovich1987turbulence,
	author = {Lawrence Sirovich},
	doi = {10.1090/qam/910463},
	issue = {3},
	journal = {Quarterly of Applied mathematics},
	pages = {573-582},
	title = {Turbulence and the dynamics of coherent structures. II. {S}ymmetries and transformations},
	volume = {45},
	year = {1987}
}

@misc{Rashad2020,
	author = {Ramy Rashad and Federico Califano and Arjan J. van der Schaft and Stefano Stramigioli},
	doi = {10.1093/imamci/dnaa018},
	issn = {14716887},
	issue = {4},
	journal = {IMA Journal of Mathematical Control and Information},
	month = {12},
	pages = {1400-1422},
	publisher = {Oxford University Press},
	title = {Twenty years of distributed port-{H}amiltonian systems: {A} literature review},
	volume = {37},
	year = {2020}
}

@article{schwerdtner2023sobmor,
	author = {Paul Schwerdtner and Matthias Voigt},
	doi = {10.1137/20M1380235},
	issn = {10957197},
	issue = {2},
	journal = {SIAM Journal on Scientific Computing},
	pages = {A502-A529},
	publisher = {Society for Industrial and Applied Mathematics Publications},
	title = {{SOBMOR}: {S}tructured Optimization-Based Model Order Reduction},
	volume = {45},
	year = {2023}
}
	
\end{document}